\documentclass{article}




    \usepackage[preprint,nonatbib]{neurips_2020}


\usepackage[utf8]{inputenc} 
\usepackage[T1]{fontenc}    
\usepackage{hyperref}       
\usepackage{url}            
\usepackage{booktabs}       
\usepackage{amsfonts}       
\usepackage{nicefrac}       
\usepackage{microtype}      

\usepackage{graphicx}
\usepackage{multirow}
\usepackage{amssymb,amsfonts,amsthm}  
\usepackage{latexsym,epsfig}
\usepackage{graphicx} 
\usepackage{dsfont}
\usepackage{lscape}
\usepackage{color}
\usepackage{float}
\usepackage{enumerate}
\usepackage[usenames,dvipsnames]{pstricks}
\usepackage{epsfig}
\usepackage{pst-grad} 
\usepackage{pst-plot} 
\usepackage{mathrsfs}
\usepackage[percent]{overpic}
\usepackage{epstopdf}
\usepackage[misc]{ifsym}
\usepackage{lipsum}
\usepackage{graphicx}
\usepackage{epstopdf}
\usepackage[english]{babel}
\usepackage{array}
\usepackage{anyfontsize}
\usepackage{amsmath}
\usepackage{cite}
\usepackage{algorithm}
\usepackage{algorithmic}
\usepackage{enumitem,kantlipsum}
\usepackage{wrapfig}

\newtheorem{theorem}{Theorem}[section]

\newtheorem{lemma}[theorem]{Lemma}
\newtheorem{proposition}[theorem]{Proposition}
\newtheorem{assumption}[theorem]{Assumption}

\newtheorem{definition}[theorem]{Definition}
\newtheorem{remark}[theorem]{Remark}

\DeclareMathOperator*{\argmin}{arg\,min}

\title{ A Distributed Cubic-Regularized Newton Method for Smooth Convex Optimization over Networks}

%

\author{%
  C\'esar A. Uribe and Ali Jadbabaie \\
  Laboratory for Information, and Decisions Systems\\
  Institute for Data, Systems, and Society \\
  Massachusetts Institute of Technology\\
  Cambridge, MA 02139 \\
  \texttt{\{cauribe,jadbabai\}@mit.edu} \\
}

\begin{document}
\maketitle

\begin{abstract}
We propose a distributed, cubic-regularized Newton method for large-scale convex optimization over networks. The proposed method requires only local computations and communications and is suitable for federated learning applications over arbitrary network topologies. We show a $O(k^{{-}3})$ convergence rate when the cost function is convex with Lipschitz gradient and Hessian, with  $k$ being the number of iterations. We further provide network-dependent bounds for the communication required in each step of the algorithm. We provide numerical experiments that validate our theoretical results.
\end{abstract} 

\section{Introduction}\label{sec:intro}

Newton's method for minimizing smooth strongly convex functions has a longstanding history in optimization and scientific computing~\cite{Bertsekas1999,Bennett592}.  The main reason for its popularity is its fast convergence rate. However,  the Newton step's computational cost has often limited its applicability to modern large-scale machine learning problems. Despite these computational challenges, there has been a resurgence of interest in Newton-type algorithms from a theoretical perspective. Over the past two decades, a series of papers by Nesterov and coauthors~\cite{nesterov2006cubic,nesterov2006cubic2} have shown that with appropriate higher-order (e.g., cubic) regularization, such methods achieve provably-fast global convergence rates ~\cite{cartis2011adaptive,nes83,nesterov2013introductory,nemirovski2004interior}.
Additionally, fast higher-order methods have been driven by new insights into their accelerated convergence rates, fundamental limits, and complexity bounds \cite{monteiro2013accelerated,agarwal2018adaptive,pmlr-v99-gasnikov19a,gasnikov2019near}, leading to a series of implementable practical algorithms~\cite{nesterov2018implementable,kamzolov2020near,nesterov2020superfast}. Nevertheless, as mentioned earlier, the impact in modern machine learning applications has been limited~\cite{zhou2019stochastic}. Specially as increasing amounts of data and distributed storage technologies have now driven the need for distributed and federated architectures~\cite{wang2018giant} that split computational cost among many nodes~\cite{hendrikx2020optimal}, e.g., Peer-to-peer federating learning~\cite{lalitha2019peer,pilet2019simple}, distributed optimization methods~\cite{sca17,scaman2018optimal,lan17,Uribe2018a,Rasul19, Mokhtari2016,Zargham2013,ye2020multi,li2018sharp}, MapReduce~\cite{dean2008mapreduce}, Apache Spark~\cite{yang2013trading}, and Parameter Server~\cite{li2014scaling}.

Several second-order distributed methods have been proposed in the literature for smooth, strongly convex functions~\cite{Rasul19,wei2013distributed,jadbabaie2009distributed,Zargham2013, mokhtari2016decentralized}. Nevertheless, such approaches do not provide global convergence rates~\cite{Rasul19} and require strong convexity assumptions to guarantee some linear convergence rate, or require specific master/worker architectures~\cite{Shamir2014,pmlr-v37-zhangb15}.  Other approaches use Quasi-Newton/BFGS-like approaches to compute approximations to the Hessian inverse~\cite{eisen2017decentralized} efficiently, but exact non-asymptotic convergence rates are not available.

The goal of this paper is to address the existing gap in the literature between cubic regularization and distributed optimization. Specifically, motivated buy Empirical Risk Minimization in machine learning applications, we consider the following finite sum minimization problem 
\begin{align}\label{eq:main1}
\min_{x\in Q } \left\lbrace f(x) \triangleq \sum_{i{=}1}^m f^i(x) \right\rbrace , 
\end{align}
 where $f^i$ is the local empirical risk of a subset of data points stored locally by an agent $i$, which means that each agent $i$ has access to the function $f^i(x)$ only. Moreover, we assume the computing units/agents are connected over a network that allows for sparse communication between them. Thus, the proposed solution needs to be executed locally at each agent, using local information only, and achieve the convergence rate as if they had access to the complete dataset. 
 

The key innovation in our solution is to provide a novel analysis for the inexact constrained cubic-regularized Newton method developed in~\cite{baes2009estimate}, to carefully control the errors induced by the disagreement among the nodes in the network, without sacrificing the convergence rate. 

To summarize, the \textit{main contributions} of this paper are as follows: 
\vspace{-0.2cm}
\begin{itemize}[leftmargin=*]
\item We propose a provably-correct (and globally convergent) distributed algorithm based on cubic-regularization. We take into account distributed storage and sparse communications and obtain a convergence rate of $O(k^{{-}3})$. \textit{To the best of the authors' knowledge, this is the first, fully distributed cubic-regularized second-order method that achieves $O(k^{{-}3})$.}
\item We characterize the communication complexity of the proposed algorithm and relate the corresponding approximation error, induced by the sparse communication, to guarantee the desired convergence rate. 
\item We propose a primal-dual distributed method for the minimization of non-separable cubic-regularized second-order functions.
\end{itemize}

 
 This paper is organized as follows. Section~\ref{sec:problem} introduces the distributed optimization problem and assumptions and presents the proposed algorithm and its convergence rate analysis. Section~\ref{sec:cubics} describes the distributed approximate solution of the cubic model minimization. Section~\ref{sec:numerics} shows some experimental results. Section~\ref{sec:open} discusses open problems on high-order methods in distributed optimization. Finally, conclusions are presented in Section~\ref{sec:conclusions}.

\noindent\textbf{Notation:} Nodes/agents are indexed from $1$ through $m$ (no actual enumeration is needed in the execution of the proposed algorithms). Superscripts $i$ or $j$ denote agent indices and the subscript $k$ denotes the iteration index of an algorithm. $[A]_{ij}$ denotes the entry of the matrix $A$ in its $i$-th row and $j$-th column. $\mathbf{I}_{n}$ denotes the identity matrix of size $n$. For a symmetric non-negative matrix $W$, $\lambda_{\max}(W)$ denotes its largest eigenvalue 
and $\lambda_{\min}^+(W)$ its smallest positive eigenvalue. The condition number of $W$ is denoted as $\chi(W) = \lambda_{\max}{(W)}/ \lambda_{\min}^{+}{(W)}$. The Euclidean norm is denoted as $\|\cdot\|$. $\boldsymbol{1}_n$ is a vector of ones of size $n$, $\otimes$ is the Kronecker product.

\section{Problem Statement, Algorithm, and Main Result}\label{sec:problem}

Consider a network of $m$ agents, modeled as a \textit{fixed}, \textit{connected}, and \textit{undirected} graph $\mathcal{G}{=}(V,E)$, where $V {=} (1,\cdots,m)$, and $E \subseteq V\times V$ is a set of edges such that $(j,i)\in E$ if and only if agent $j$ is connected to agent $i$. Agents try to jointly solve~\eqref{eq:main1}, but an agent $i \in V$ has access to $f^i(x)$, $\nabla f^i(x)$, and $\nabla^2 f^i(x)$ only. However, agents are allowed to exchange information over the network $\mathcal{G}$ with its neighbors. We assume each $f^i : Q  \to \mathbb{R}$ is convex with Lipschitz continuous gradient and Hessian, defined in a nonempty, convex, and compact set $Q \subset \mathbb{R}^n$. We further assume without loss of generality that $f$ attains its minimum $f^*$ in the interior of~$Q$.

We can write~\eqref{eq:main1} to introduce the graph $\mathcal{G}$ into the problem formulation~\cite{sca17,lan17,Uribe2018a}. Consider the Laplacian $W_\mathcal{G} \in \mathbb{R}^{m\times m}$ of the graph $\mathcal{G}$, defined as a matrix with entries $[W_\mathcal{G}]_{ij} {=}{-}1$ if $(j,i) \in E$, $[W_\mathcal{G}]_{ij} =\text{deg}(i)$ {if } $i{=} j$, and $[W_\mathcal{G}]_{ij}=0$ otherwise, where $\text{deg}(i)$ is the degree of the node $i$, i.e., the number of neighbors of the node. The matrix $W_\mathcal{G}$ is symmetric and positive semi{-}definite, and $\boldsymbol{1}_m$ is the unique (up to a scaling factor) eigenvector associated with the eigenvalue
$\lambda_{W}^1{=}0$. Thus, for a vector $z\in\mathbb{R}^m$ it holds that $W_\mathcal{G}{z} {=} 0$ if and only if $z_1 {=} \hdots {=} z_m$. If each agent holds a local copy $x^i\in \mathbb{R}^n$ of the decision variable, we obtain the optimization problem:
\begin{align}\label{eq:main2}
\min_{\substack{\mathbf{x} \in Q^m \\   \sqrt{\mathbf{W}}\mathbf{x}   {=} \boldsymbol{0}_{nm} } } \left\lbrace F(\mathbf{x}) \triangleq \sum_{i{=}1}^m f^i(x^i) \right\rbrace , 
\end{align}
where $\mathbf{W} \triangleq W_\mathcal{G} \otimes \mathbf{I}_n$ and $Q^m {=} \{\mathbf{x} \in \mathbb{R}^{nm} \mid \mathbf{x}^\intercal {=} [(x^1) ^\intercal,\cdots, (x^m) ^\intercal]^\intercal, x^i \in Q \ \forall i \in V \} $. 

Problem $\eqref{eq:main2}$ is a reformulation of Problem~$\eqref{eq:main1}$, as the constraint $  \sqrt{\mathbf{W}}\mathbf{x}     {=} \boldsymbol{0}_m$ implies $x^1 {=} \cdots  {=}x^m$. Thus, an optimal point of $\eqref{eq:main2}$ is such that $\mathbf{x}^* {=} \boldsymbol{1}_m \otimes x^* $, where $x^*$ is an optimal point of $\eqref{eq:main1}$. 

Our goal is to  find approximate distributed solutions to Problem $\eqref{eq:main2}$ defined as follows:
\begin{definition}[{\cite[Definition $1$]{lan17}}]\label{def:opt_sol}
    A point $ \hat{\mathbf{x}}$ is called an $(\varepsilon,\tilde{\varepsilon})${-}solution of~\eqref{eq:main2} if $
    F (\hat{\mathbf{x}}) {-} F ^*  
    \leq  \varepsilon$, and $\|\sqrt{\mathbf{W}} \hat{\mathbf{x}}\|_2 \leq \tilde{\varepsilon}$,
    where $F^*$ denotes the optimal value of~\eqref{eq:main2}. 
\end{definition}

Additionally, we define an inexact solution of a constrained optimization problem as:
\begin{definition}\label{def:approx_sol}
    We define a point $\hat x \approx_\delta \argmin_{x \in Q} f(x)$ as a point in $X$ such that $f(\hat x) {-} f^* \leq \delta$, where $f^*$ is the minimum value of the function $f(x)$ over the set $X$. 
\end{definition}

For analysis purposes we define the set $\mathcal{Q}_{\tilde{\varepsilon}} {=} \{ \mathbf{x} \in \mathbb{R}^{nm} \mid \|\sqrt{\mathbf{W}} \hat{\mathbf{x}}\|_2 \leq \tilde{\varepsilon}\}$, which will come handy in later sections. Furthermore, we assume the following conditions are satisfied.


\begin{assumption}[Lipschitz gradient]\label{assum:smooth1}
    Each function $f^i(x)$ is differentiable and has $M^i_1${-}Lipschitz continuous gradients over the set $ Q$, i.e., for any $x,y \in  Q$, $\|\nabla f^i(x) {-} \nabla f^i(y)\| \leq M^i_1\|x {-}y\|$.
\end{assumption}

\begin{assumption}[Lipschitz Hessian]\label{assum:smooth2}
    Each function $f^i(x)$ is twice differentiable and has $M^i_2${-}Lipschitz continuous Hessian over the set $ Q$, i.e., for any $x,y \in  Q$, $
    \|\nabla^2 f^i(x) {-} \nabla^2 f^i(y)\| \leq M^i_2\|x {-}y\|$. Note that $F(\mathbf{x})$ has $M_1${-}Lipschitz gradient and $M_2${-}Lipschitz Hessian, with $M_1 {=} \max_{i\in V} M_1^i$, and $M_2 {=} \max_{i\in V} M_2^i$.
\end{assumption}

\begin{assumption}\label{assum:bounded}
    The diameter of the compact set $Q$ is upper bounded by a constant $D_Q$, i.e., $\max_{x,y \in Q}    \|x {-} y\|  \leq D_Q$.
\end{assumption}

Next, we state our main result. In what follows we show that the distributed Algorithm in ~\ref{alg:main} guarantees that  agents jointly construct a $(\varepsilon,\tilde{\varepsilon})${-}solution to~\eqref{eq:main2} with a convergence rate of $O(k^{-3})$. Algorithm~\ref{alg:main} follows the same structure as the constrained cubic regularized Newton method proposed in~\cite{baes2009estimate}. We define the cubic regularized second order approximation of the function $F(\mathbf{x})$ at a point $\mathbf{z}$ as follows:
\begin{align}\label{eq:aux_subproblem}
&\hat{F}(\mathbf{z},\mathbf{x}) \triangleq  F(\mathbf{z}) {+} \langle \nabla F(\mathbf{z}), \mathbf{x}{-}\mathbf{z} \rangle + \frac{1}{2} \langle \nabla^2 F(\mathbf{z})(\mathbf{x} {-}\mathbf{z}),\mathbf{x} {-}\mathbf{z}\rangle {+} \frac{N}{6}\|\mathbf{x} {-}\mathbf{z}\|^3.
\end{align}

\begin{theorem}[Main Result]\label{thm:main_result}
    Let Assumptions~\ref{assum:smooth1}, \ref{assum:smooth2} and \ref{assum:bounded} hold, $\varepsilon >0$ be a desired accuracy, and $\gamma \in (0,1)$. Moreover, set the number of iterations Algorithm~\ref{alg:main} to $
    K \geq   \lceil{ 12 \varepsilon^{-1/3}\left( F(\mathbf{x}_{0}) {-}F^* {+} \frac{M_2}{6}\|\mathbf{x}_{0}{-}\mathbf{x}^*\|^3 \right)^{1/3} } \rceil$, where $\mathbf{x}^*$ maximizes $R= O(\|\boldsymbol{x}_0 - \boldsymbol{x}^*\|)$, and at every $k\geq 1$, set the accuracy of the auxiliary sub-problems (Lines $10$ and $13$) as
        \begin{align*}
        0&\leq \delta^\phi_k \leq \min \left\lbrace  1, \left(\frac{\left(\frac{\alpha_k \gamma}{1{-}\alpha_k}\right) \varepsilon}{1{+}D_QL_0\big(\frac{6\lambda_k^{2}}{ \sigma_0 } \big)^{1/3}}\right)^3\right\rbrace, \ \  
        0\leq \delta^{F}_k \leq \min \left\lbrace  1, \left(\frac{\left((1{-}\gamma)\alpha_k {+}\frac{1}{2}\right)\varepsilon}{1{+}D_QL_0\big(\frac{3}{\sigma_0} \big)^{1/3}}\right)^3\right\rbrace.
        \end{align*}
  Then, the output of Algorithm~\ref{alg:main}, i.e.,  $\mathbf{x}_K$, is an $(\varepsilon,\varepsilon/R)$- approximate solution of Problem~\eqref{eq:main2}.
\end{theorem}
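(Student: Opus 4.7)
The plan is to mirror the estimating-sequence proof of the inexact accelerated cubic-regularized Newton method of~\cite{baes2009estimate}, extending it to handle (i) the consensus constraint $\sqrt{\mathbf{W}}\mathbf{x}=\boldsymbol{0}_{nm}$ from the reformulation~\eqref{eq:main2}, and (ii) the fact that both inner subproblems (Lines 10 and 13 of Algorithm~\ref{alg:main}) are solved only approximately by the distributed primal--dual solver described in Section~\ref{sec:cubics}. I would introduce weights $a_k$, $A_k=\sum_{i\leq k} a_i$, $\alpha_k = a_{k+1}/A_{k+1}$ satisfying the cubic-acceleration relation $a_{k+1}^3 \asymp A_{k+1}^2/N$, so that $A_k=\Theta(k^3/N)$, together with a sequence of cubic estimating functions
\[
\psi_k(\mathbf{x}) \triangleq \sum_{i=0}^k a_i\bigl[F(\mathbf{y}_{i-1})+\langle \nabla F(\mathbf{y}_{i-1}),\mathbf{x}-\mathbf{y}_{i-1}\rangle\bigr] + \frac{N}{6}\|\mathbf{x}-\mathbf{x}_0\|^3.
\]

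The central object is the inductive invariant $A_k(F(\mathbf{x}_k)-F^*) \leq \min_{\mathbf{x}\in\mathcal{Q}_{\tilde{\varepsilon}}}\psi_k(\mathbf{x})-A_k F^* + E_k$, where $E_k$ accumulates the inexactness of the two inner solves. The induction step rests on two bounds. First, the standard cubic descent inequality: from Assumption~\ref{assum:smooth2}, an \emph{exact} minimizer $\mathbf{x}_{k+1}$ of the cubic model~\eqref{eq:aux_subproblem} at $\mathbf{y}_k$ satisfies $F(\mathbf{x}_{k+1}) \leq F(\mathbf{y}_k) - \Omega(N\|\mathbf{x}_{k+1}-\mathbf{y}_k\|^3)$. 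Second, a perturbation argument: a $\delta^F_k$-approximate minimizer degrades this bound by an additive term of order $\delta^F_k + D_Q L_0 (\delta^F_k/\sigma_0)^{1/3}$, the cube root arising from converting a cubic-model objective gap into an iterate-distance gap through the strong cubic growth of the regularized model. The analogous analysis for the auxiliary minimization of $\psi_k$ produces an error term weighted by $\alpha_k/(1-\alpha_k)$ and controlled through $\delta^\phi_k$.

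The tolerance schedules in the statement are calibrated precisely to make these per-step error terms telescope: the cubes in the formulas absorb the cube roots appearing in the perturbation bounds, while the $D_Q L_0$ and $\sigma_0$ factors cancel the corresponding denominators, so that the $\delta^\phi_k$-family sums to at most $\gamma\varepsilon$ and the $\delta^F_k$-family to at most $(1-\gamma)\varepsilon$. Combined with $\min_{\mathbf{x}}\psi_K(\mathbf{x}) \leq A_K F^* + \frac{N}{6}\|\mathbf{x}^*-\mathbf{x}_0\|^3$, the invariant yields $F(\mathbf{x}_K)-F^* \leq C\bigl(F(\mathbf{x}_0)-F^* + \tfrac{M_2}{6}\|\mathbf{x}_0-\mathbf{x}^*\|^3\bigr)/K^3 + \varepsilon$; forcing the leading term below $\varepsilon$ recovers the stated iteration count. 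The feasibility bound $\|\sqrt{\mathbf{W}}\mathbf{x}_K\|\leq\varepsilon/R$ is inherited directly from the fact that every subproblem is solved over $\mathcal{Q}_{\tilde{\varepsilon}}$ with $\tilde{\varepsilon}=\varepsilon/R$, and that $\mathbf{x}_K$ is a convex combination of such feasible approximate solutions.

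The main obstacle, I expect, will be the perturbation step: I need to show that using an inexact minimizer of a cubically regularized model, returned by an inner distributed loop that itself respects the $\sqrt{\mathbf{W}}$ constraint only approximately, still produces a single-step descent with an error scaling as the cube root of the inner suboptimality. This is where the outer $O(k^{-3})$ rate gets coupled to the communication complexity of the inner solver, and it is the step that forces the specific cubic form of $\delta^\phi_k$ and $\delta^F_k$ (as opposed to a simpler polynomial-in-$k$ tolerance) stated in the theorem.
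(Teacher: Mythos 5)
Your overall architecture matches the paper's: both proofs run an inexact estimate-sequence induction in the style of Baes, both hinge on converting a function-value gap $\delta$ of an inner subproblem into an iterate-distance gap of order $(\delta/\sigma_0)^{1/3}$ via the uniform (cubic) strong convexity of the regularized model (this is exactly Proposition~\ref{prop:primal_dual} and the bounds~\eqref{eq:aux2} and~\eqref{eq:bound_gradient} in the paper), and both inherit the consensus feasibility from the fact that every subproblem is solved over $Q^m\cap\mathcal{Q}_{\tilde\varepsilon}$ and then invoke Proposition~\ref{prop:constrains} to pay the $O(\tilde\varepsilon\|\mathbf{y}_0^*\|)$ price of relaxing $\sqrt{\mathbf{W}}\mathbf{x}=0$ to $\|\sqrt{\mathbf{W}}\mathbf{x}\|\leq\tilde\varepsilon$. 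Your weighted formulation with $A_k=\Theta(k^3)$ and $\psi_k=A_k\phi_k$ is just a renormalization of the paper's $\lambda_k=1/A_k$ bookkeeping, so that part is only cosmetically different.

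The genuine gap is in your error accounting. You claim the tolerances are calibrated so that "the $\delta^\phi_k$-family sums to at most $\gamma\varepsilon$ and the $\delta^F_k$-family to at most $(1-\gamma)\varepsilon$." With the stated tolerances the per-step error contributions are $\Theta(\alpha_k)\varepsilon$, and since $\lambda_K=\prod_{k}(1-\alpha_k)\to 0$ forces $\sum_k\alpha_k=\infty$ (indeed $\alpha_k=(\lambda_{k+1}/12)^{1/3}\sim 1/k$), these families are \emph{not} summable; an additive accumulation $E_K=\sum_k e_k$ bounded by a constant multiple of $\varepsilon$ cannot be extracted from them. The mechanism that actually closes the induction (Lemma~\ref{lemma:main}) is multiplicative, not telescoping: the invariant $\min_{\mathbf{x}}\phi_k(\mathbf{x})\geq F(\mathbf{x}_k)-\varepsilon$ is maintained with the \emph{same} $\varepsilon$ at every $k$, because the recursion $\phi_{k+1}=(1-\alpha_k)\phi_k+\alpha_k(\cdots)$ contracts the inherited error to $(1-\alpha_k)\varepsilon$, which frees an $\alpha_k\varepsilon$ budget per step that is then split by $\gamma$ between the two subproblems; one needs $(1-\alpha_k)\bar\delta^\phi_k+\tilde\delta^F_k\leq\alpha_k\varepsilon$, which is precisely why the prefactors $\frac{\alpha_k\gamma}{1-\alpha_k}$ and $(1-\gamma)\alpha_k$ appear in the theorem. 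In your additive formulation you would instead have to compare $E_K$ against the growing weight $A_K\sim K^3$ (i.e., show $E_K/A_K\leq\varepsilon$), which is a different and weaker requirement than the one you state. As written, the step "the tolerance schedules make the per-step error terms telescope" would fail, and the induction would not close without replacing it by the contraction argument.
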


\begin{algorithm}[ht!]
    \caption{Dec. Cubic Regularized Method}
    \label{alg:main}
    \begin{algorithmic}[1]
        \STATE {\bfseries Input:} $x_0^i{=} \boldsymbol{0}_n$ $\upsilon_0^i{=} x_0^i$, $\lambda_0{=} 1$, $\forall i\in V$.
        \STATE \hspace{1cm} $\phi_0(\mathbf{x}) {=} F(\mathbf{x}_0) {+} M_2\|\mathbf{x}{-}\mathbf{x}_0\|^3/6$.
        \STATE \hspace{1cm} Number of iterations $K$.
        \STATE \textit{Each agent executes the following:}
        \FOR{$k{=}1,\cdots K-1$ }        
        \STATE Find $\alpha_k$ such that $12\alpha_k^3 {=} (1{-}\alpha_k)\lambda_k$.
        \STATE $\lambda_{k{+}1} {=} (1{-}\alpha_k)\lambda_k$.
        \STATE $z^i_{k} {=} \alpha_k \upsilon_k^i {+} (1{-}\alpha_k)x_k^i$.
        \STATE \textit{Use Algorithm~\ref{alg:approx_subproblem} to jointly solve:}
        \STATE $\mathbf{x}_{k{+}1}     \approx_{\delta^F_k} \argmin\limits_{\mathbf{x} \in Q^m \bigcap  \mathcal{Q}_{\tilde\varepsilon}} \hat{F}(\mathbf{z}_k,\mathbf{x})$.
        \STATE $\phi_{k{+}1}(\mathbf{x}) {=} (1{-}\alpha_k)\phi_k(\mathbf{x}) + \alpha_k \big( F(\mathbf{x}_{k{+}1}) + \langle \nabla F(\mathbf{x}_{k{+}1}),\mathbf{x}{-} \mathbf{x}_{k{+}1} \rangle \big)$.
        \STATE \textit{Use Algorithm~\ref{alg:approx_subproblem} to jointly solve:}
        \STATE $\boldsymbol{\upsilon}_{k{+}1}     \approx_{\delta^\phi_k} \argmin\limits_{\mathbf{x} \in Q^m \bigcap  \mathcal{Q}_{\tilde\varepsilon}} \phi_{k{+}1}(\mathbf{x})$.
        \ENDFOR
        \STATE {\bfseries Output:} End points $x_K^i, \ \forall i\in V$.
    \end{algorithmic}
\end{algorithm}

Theorem~\ref{thm:main_result} states that with the appropriate selection of inexactness of the subproblems in Algorithm~\ref{alg:main}, Lines $10$ and $13$, it is possible to obtain a fast convergence rate of~$O(k^{-3})$ in a fully distributed manner. Section~\ref{sec:cubics} shows that such an approximate solution can be computed in a distributed manner via Algorithm~\ref{alg:approx_subproblem}.

\textit{Proof Sketch (Theorem~\ref{thm:main_result})} In~\cite{baes2009estimate}, the authors provide an inexact cubic regularized Newton method with an oracle complexity of $O(\varepsilon^{-1/3})$ for constrained convex problems with Lipschitz Hessian. 
We exploit the dual representation of the cubic terms to build a separable problem amenable to distributed computation. Thus, we bound the communication complexity of the algorithm by primal-dual analysis of the subproblems (Algorithm~\ref{alg:main}, Lines $10$, and $13$). Technically, we show that with appropriate selection of $\delta_k^F$ and $\delta_k^\phi$, if for some $k\geq1$ it holds that $\min_{\mathbf{x} \in Q^m \bigcap  \mathcal{Q}_{\tilde\varepsilon}} \phi_k(\mathbf{x})  \geq F(\hat{\mathbf{x}}_k) - \varepsilon$, then this also holds for $k+1$. Once the inexactness bounds are computed, the oracle complexity of Algorithm~\ref{alg:main} follows from the analysis of the estimating sequences for the particular problem.$\blacksquare$

\begin{algorithm}[ht!]
    \caption{Dec. Approximate Cubic Solver}
    \label{alg:approx_subproblem}
    \begin{algorithmic}[1]
        \STATE {\bfseries Input:} $\mathbf{w}_0^i{=} \boldsymbol{0}_n$, $ \forall i\in V$ $\tilde{\mathbf{w}}_0^i{=} \mathbf{w}_0^i$, $z_k^i$, $\delta >0$.
        \STATE \hspace{1cm} Number of iterations $T$.
        \STATE \textit{Each agent $i$ executes the following:}
        \STATE Compute $\mathbf{g}_i {=} \nabla f^i(z^i_k)$, and $\mathbf{H}_i {=}\nabla^2 f^i(z^i_k)$.
        \STATE  Set $\hat \mu {=} \delta/(2R^2)$, $q {=} \frac{\hat \mu}{M_1 {+} \hat \mu}\frac{\lambda_{\text{min}}^{+}(W)}{\lambda_{\max}(W)}$.
        \STATE  Set $\beta_0$ as the solution to ${\beta_0^2{-}q}{}{=}1{-}\beta_0$.
        \STATE Decompose $\mathbf{H}_i {=} U_i^\intercal\Lambda_i U_i$.
        \FOR{$t{=}0,1,\cdots,T-1$}
        \STATE $\gamma^i {=} U_i\left( \tilde{\mathbf{w}}^i_t {-} \mathbf{g}_i\right)$.
        \STATE Solve $\tau_i^*$ for $ \frac{m}{4} \sum_{j{=}1}^{d} \frac{[\gamma^i]_j^2}{(s_j {+} N\tau_i^* {+} \hat\mu)^2}{=}(\tau_i^*)^2$.
        \STATE $\mathbf{h}_i^*(\tilde{\mathbf{w}}^i_t)  {=}  U_i^\intercal\left( \Lambda_i {+} N\tau_i^*\mathbf{I}_n {+} \hat{\mu}\mathbf{I}_n\right)^{{-}1}\gamma^i$.
        \STATE Share $\mathbf{h}_i^*(\tilde{\mathbf{w}}^i_t)$ with $j$ s.t. $(i,j)\in E$.
        \STATE Receive $\mathbf{h}_j^*(\tilde{\mathbf{w}}^j_t)$ from $j$ s.t. $(j,i)\in E$.
        \STATE $\mathbf{w}_{t{+}1}^i {=} \tilde{\mathbf{w}}_t^i {-} \frac{\hat{\mu}}{\lambda_{\text{max}}(W)} \sum_{j{=}1}^{m}[W]_{ij}\mathbf{h}_j^*(\tilde{\mathbf{w}}^j_t)$.
        \STATE {  $\beta_{t{+}1}^2{{=}} (1{-} \beta_{t{+}1})\beta_{t}^2{+}q\beta_{t{+}1}$, $\beta_{t{+}1}\in (0,1)$.}
        \STATE $\tilde \beta_{t} {=} {\beta_t(1{-}\beta_t)}/({\beta_t^2{+}\beta_{t{+}1}})$.
        \STATE $\tilde{\mathbf{w}}^i_{t{+}1} {=} {\mathbf{w}}^i_t {+} \tilde \beta_t({\mathbf{w}}^i_{t{+}1}{-}{\mathbf{w}}^i_t)$.
        \ENDFOR    
        \STATE {\bfseries Output:} End points $\mathbf{h}_i^*(\tilde{\mathbf{w}}^i_T)+z^i_k, \  \forall i\in V$.
    \end{algorithmic}
\end{algorithm}



We recognize that the  result on $O(\varepsilon^{-1/3})$ oracle calls obtained in Theorem~\ref{thm:main_result} is not optimal.  Second-order methods have been shown to have a lower complexity bound of $O(\varepsilon^{-2/7})$~\cite{agarwal2018adaptive,monteiro2013accelerated}. However, as pointed out in~\cite[Section 4.3.3]{Nesterov2018}, the gain by achieving the optimal rate is bounded by a factor of $O(\varepsilon^{-1/21})$. Therefore, for values of $\varepsilon$ used in practical applications, e.g., $10^{-12}$, the gain is an absolute constant less than $4$. Nevertheless, from a conceptual point of view, getting near-optimal rates remains a valuable open problem. The main difficulty lies in the implementation of distributed line-search procedures, which is an open question in distributed optimization and out of the scope of this paper.

In the next section, we describe the technical details of the proposed approach for the approximate distributed minimization of a cubic regularized second-order model~\eqref{eq:aux_subproblem}.

\section{Distributed Approximate Minimization of Cubic functions}\label{sec:cubics}

In this section, we study how Algorithm~\ref{alg:approx_subproblem} approximately solves (c.f.~Definition~\ref{def:approx_sol}) a \textit{cubic regularized} second{-}order approximation~\eqref{eq:aux_subproblem} in a distributed manner over a network. We focus on optimization problems of the form 
\begin{align}\label{eq:generic_subproblem}
\min_{\substack{\mathbf{h} \in\mathcal{H} \subset \mathbb{R}^{nm} \\   A\mathbf{h}   {=} \boldsymbol{0}_{nm} }} \left\lbrace \boldsymbol{\Phi}(\mathbf{h}) {\triangleq} \langle \mathbf{g}, \mathbf{h}\rangle {{+}} \frac{1}{2} \langle \mathbf{H}\mathbf{h},\mathbf{h}\rangle  {{+}} \frac{N}{6}\|\mathbf{h}\|^3\right\rbrace,
\end{align}
where $A$ is a generic matrix whose null space is the consensus subspace, i.e., $Ax{=}0 \iff x_i {=} x_j$. Note the subproblems in Lines $10$ and $13$ of Algorithm~\ref{alg:main}, have the form~\eqref{eq:generic_subproblem}. 
Later in this section, we will see the specific details when for a set of points $(z^1_k,\cdots,z^m_k)$ where each $z^i_k$ is stored locally by an agent $i$, we have $\mathbf{g}^\intercal {=} [\mathbf{g}^\intercal_1,\cdots,\mathbf{g}^\intercal_m]$, where $\mathbf{g}_i {=} \nabla f^i(z^i_k)$ for $i \in V$, and $\mathbf{H}{=}{blkdiag}(\mathbf{H}_1,\cdots,\mathbf{H}_m)$\footnote{The function ${blkdiag}(A,\cdots,B)$ generates a block diagonal matrix whose elements are each of the input arguments.} where $\mathbf{H}_i {=} \nabla^2 f^i(z^i_k)$ for $i \in V$, $\mathcal{H} {=}  Q^m$,  $A {=} \sqrt{\mathbf{W}}$ and $\mathbf{h} {=} \mathbf{x}{-}\mathbf{z}$.

Finding a distributed solution to separable problems with linear constraints has been extensively studied in recent literature, due to the flexibility of such an approach in incorporating limited storage and sparse computations~\cite{gas16b,lan17,sca17,Uribe2018a}. However, the main requirement is for the cost function to be separable, i.e., write it as a finite sum of functions. This is true for the first two terms in~\eqref{eq:generic_subproblem} by construction (i.e., the linear and quadratic terms), where one can write $\langle \mathbf{g}, \mathbf{h}\rangle {+} \frac{1}{2} \langle \mathbf{H}\mathbf{h},\mathbf{h}\rangle{=} \sum_{i{=}1}^m \mathbf{g}_i^\intercal \mathbf{h}_i {+} \frac{1}{2}\sum_{i{=}1}^m\mathbf{h}_i^\intercal \mathbf{H}_i \mathbf{h}_i $. Unfortunately, this is not the case for the cubic term $\|\mathbf{h}\|^3$.

Our first task is to exploit the dual structure of the cubic term in~\eqref{eq:generic_subproblem} to construct a surrogate cost function amenable to distributed optimization algorithms. We provide this dual structure for the cubic term in the next proposition.

\begin{proposition}\label{prop:cubic_dual}
    Given some $\mathbf{x}^\intercal {=} [\mathbf{x}^\intercal_1,\cdots,\mathbf{x}^\intercal_m]$ where $\mathbf{x}_i \in \mathbb{R}^n$ for all $i \in V$. Then,
    \begin{align*}
    \frac{1}{3}\|\mathbf{x}\|^3 & {=}  \max_{\substack{  {\tau}_i \geq 0, \  \tau_i   {=} \tau_j \\ i,j \in V}} \left\lbrace \sum_{i{=}1}^{m} \|\mathbf{x}_i\|^2 \tau_i {-}\frac{4}{3m} \sum_{i{=}1}^{m} \tau_i^3 \right\rbrace .
    \end{align*}
\end{proposition}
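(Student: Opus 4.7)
The plan is to exploit the consensus constraint $\tau_i = \tau_j$ for all $i,j \in V$ to collapse the vector-valued maximization to a scalar one, then reduce the claim to the elementary Fenchel-type identity $\tfrac{t^3}{3} = \max_{\tau \ge 0}\{t^2\tau - \tfrac{4}{3}\tau^3\}$ with $t = \|\mathbf{x}\|$. Since this is a purely algebraic identity, I expect no real obstacle; the only thing to double-check is the correctness of the normalizing factor $\tfrac{4}{3m}$, which exists precisely to absorb the $m$-fold replication introduced by the consensus constraint.

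First, I would observe that the feasible set $\{(\tau_1,\dots,\tau_m)\in\mathbb{R}_{\ge 0}^m : \tau_i = \tau_j \ \forall i,j\}$ is in bijection with $\mathbb{R}_{\ge 0}$ via $\tau_i \equiv \tau$. Substituting this into the objective and using $\sum_{i=1}^m \|\mathbf{x}_i\|^2 = \|\mathbf{x}\|^2$ (by definition of $\mathbf{x}$ as the concatenation of the $\mathbf{x}_i$'s) and $\sum_{i=1}^m \tau^3 = m\tau^3$, the right-hand side simplifies to
\begin{equation*}
\max_{\tau \ge 0}\ \left\{ \tau\|\mathbf{x}\|^2 - \tfrac{4}{3}\tau^3 \right\}.
\end{equation*}

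Next, I would solve this one-dimensional concave maximization in closed form. Differentiating in $\tau$ and setting the derivative to zero yields $\|\mathbf{x}\|^2 - 4\tau^2 = 0$, so the unique nonnegative stationary point is $\tau^* = \|\mathbf{x}\|/2$. Concavity of $\tau \mapsto \tau\|\mathbf{x}\|^2 - \tfrac{4}{3}\tau^3$ on $[0,\infty)$ (its second derivative is $-8\tau \le 0$) confirms that $\tau^*$ is the global maximizer.

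Finally, I would evaluate the objective at $\tau^*$:
\begin{equation*}
\tau^*\|\mathbf{x}\|^2 - \tfrac{4}{3}(\tau^*)^3 = \tfrac{\|\mathbf{x}\|^3}{2} - \tfrac{4}{3}\cdot\tfrac{\|\mathbf{x}\|^3}{8} = \tfrac{\|\mathbf{x}\|^3}{2} - \tfrac{\|\mathbf{x}\|^3}{6} = \tfrac{\|\mathbf{x}\|^3}{3},
\end{equation*}
which matches the left-hand side and completes the proof. The whole argument is a two-line calculation once the consensus constraint is used; the statement's apparent vector structure is a notational convenience that makes the identity fit cleanly into the separable primal-dual framework of Problem~\eqref{eq:generic_subproblem}.
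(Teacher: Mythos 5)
Your proof is correct and follows essentially the same route as the paper's: collapse the consensus-constrained maximization to a scalar problem in $\bar\tau$, apply the first-order optimality condition $\|\mathbf{x}\|^2 - 4\bar\tau^2 = 0$, and evaluate at $\bar\tau = \|\mathbf{x}\|/2$. Your additional concavity check is a small but welcome bit of rigor the paper omits.
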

\begin{proof}
    Projecting on the consensus subspace where $\tau_i   {=} \tau_j$, we have $
    \sum_{i{=}1}^{m} \|\mathbf{x}_i\|^2 \bar\tau {-}(4/3)\bar\tau^3$,
    and by first order optimality conditions $\sum_{i{=}1}^{m} \|\mathbf{x}_i\|^2 {-} 4\bar\tau^2{=}0$. Solving for $\bar\tau$ completes the proof.
\end{proof}

With Proposition~\ref{prop:cubic_dual} at hand, we can rewrite~\eqref{eq:generic_subproblem} as
\begin{align}\label{eq:generic_subproblem_dual}
\min_{\substack{\mathbf{h} \in\mathcal{H} \subset \mathbb{R}^{nm} \\   A\mathbf{h}   {=} \boldsymbol{0}_{nm} }} \max_{\substack{  {\tau}_i \geq 0 \  i \in (1,\cdots,m) \\ B\tau {=} \boldsymbol{0}_m }} \left\lbrace  \langle \mathbf{g}, \mathbf{h}\rangle {+} \frac{1}{2} \langle \mathbf{H}\mathbf{h},\mathbf{h}\rangle  {+} \frac{N}{2}\sum_{i{=}1}^{m} \|\mathbf{h}_i\|^2 \tau_i {-}\frac{2N}{3m} \sum_{i{=}1}^{m} \tau_i^3 \right\rbrace,
\end{align}
where, we have written the consensus constraints on $\tau_i$ by introducing a vector $\tau {=} [\tau_1,\cdots, \tau_m]$ and a generic matrix $B\in \mathbb{R}^{m\times m}$ with $B\tau {=}0 \iff \tau_i {=} \tau_j$ for $i\in(1,\cdots,m)$.

First, in the next lemma we show that the constraint $B\tau {=} \boldsymbol{0}_m$ is not required in \eqref{eq:generic_subproblem_dual}, as the structure of the problem will guarantee a feasible optimal point in $A\mathbf{h}   {=} \boldsymbol{0}_{nm}$ will also be in $B\tau {=} \boldsymbol{0}_m$. This will simplify the analysis for the design of the distributed approximate solver of~\eqref{eq:generic_subproblem}.

\begin{lemma}\label{lemma:no_need_tau}
    An optimal solution pair $(\mathbf{h}^*,\tau^*)$ of 
    \begin{align}\label{eq:generic_subproblem_dual2}
    \min_{\substack{\mathbf{h} \in\mathcal{H} \subset \mathbb{R}^{nm} \\   A\mathbf{h}   {=} \boldsymbol{0}_{nm} }} \max_{\substack{  {\tau}_i \geq 0 \\ i \in (1,\cdots,m)  }} \left\lbrace  \langle \mathbf{g}, \mathbf{h}\rangle {+} \frac{1}{2} \langle \mathbf{H}\mathbf{h},\mathbf{h}\rangle {+} \frac{N}{2}\sum_{i{=}1}^{m} \|\mathbf{h}_i\|^2 \tau_i {-}\frac{2N}{3m} \sum_{i{=}1}^{m} \tau_i^3 \right\rbrace,
    \end{align}
    is also an optimal pair for~\eqref{eq:generic_subproblem_dual}.
\end{lemma}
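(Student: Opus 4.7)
The plan hinges on the observation that the consensus constraint $A\mathbf{h} = 0$ already forces $\mathbf{h}_1 = \cdots = \mathbf{h}_m$, so the block norms $\|\mathbf{h}_i\|^2$ agree across $i$. The inner maximization in $\tau$ then decouples into $m$ identical one-dimensional problems whose common maximizer automatically lies in the consensus subspace, rendering the constraint $B\tau = \boldsymbol{0}_m$ redundant.

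Concretely, I would fix any $\mathbf{h} \in \mathcal{H}$ with $A\mathbf{h} = 0$ and set $c \triangleq \|\mathbf{h}_i\|^2$ (a value common across every $i$). The $\tau$-dependent part of the objective is
\begin{equation*}
\frac{N}{2}\sum_{i=1}^m \|\mathbf{h}_i\|^2\, \tau_i \;-\; \frac{2N}{3m} \sum_{i=1}^m \tau_i^3 \;=\; \sum_{i=1}^m \Bigl( \tfrac{Nc}{2}\,\tau_i \,-\, \tfrac{2N}{3m}\,\tau_i^3 \Bigr),
\end{equation*}
a sum of $m$ identical strictly concave summands in $\tau_i \geq 0$. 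The first-order condition $\tfrac{Nc}{2} - \tfrac{2N}{m}\tau_i^2 = 0$ yields the same unrestricted maximizer $\tau_i^* = \sqrt{mc/4}$ for every $i$, so that the resulting $\tau^*$ automatically satisfies $B\tau^* = \boldsymbol{0}_m$.

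Finally, since for every $\mathbf{h}$ feasible in both problems the inner maximum of \eqref{eq:generic_subproblem_dual2} is attained on the subspace $\{B\tau = \boldsymbol{0}_m\}$, it coincides with the inner maximum of \eqref{eq:generic_subproblem_dual}. Consequently, the two outer minimizations share the same value and the same set of optimizers $\mathbf{h}^*$, and the pair $(\mathbf{h}^*,\tau^*)$ produced by \eqref{eq:generic_subproblem_dual2} is feasible and optimal for \eqref{eq:generic_subproblem_dual}. The only real subtlety is the min-max bookkeeping — adding a constraint to an inner maximization can only decrease its value, so one must verify that the unrestricted optimum already lies in the smaller feasible set — but the separability-plus-identical-summands argument delivers this verification immediately, and I expect no further obstacles.
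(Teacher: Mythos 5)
Your proof is correct, and it takes a genuinely different and more elementary route than the paper's. You argue pointwise: for every $\mathbf{h}$ feasible in the outer minimization, the constraint $A\mathbf{h}=\boldsymbol{0}$ already forces $\|\mathbf{h}_1\|=\cdots=\|\mathbf{h}_m\|$, so the inner maximization separates into $m$ identical concave one-dimensional problems whose common maximizer $\tau_i^*=\sqrt{mc}/2$ lies in the consensus subspace; hence adding $B\tau=\boldsymbol{0}_m$ does not change the inner maximum, and the two min--max problems coincide as functions of $\mathbf{h}$. The paper instead works through the full Lagrangian of~\eqref{eq:generic_subproblem_dual}, writes the stationarity conditions for $(\mathbf{h},\tau,\mathbf{y},\boldsymbol{\eta})$, and uses the eigendecomposition of $A$ to show that the multiplier term $A^\intercal\boldsymbol{\eta}$ must vanish, so the constraint $B\tau=\boldsymbol{0}_m$ is inactive at optimality. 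Your argument buys simplicity and robustness: it needs no duality or KKT machinery, no differentiability of the saddle problem, and no spectral argument, and it also verifies the min--max bookkeeping explicitly (restricting an inner max can only decrease it, and you show the unrestricted maximizer is already feasible). The paper's KKT route would in principle generalize to situations where the inner problem does not decouple so cleanly, but for this particular objective your direct computation is the cleaner proof. Two cosmetic points: the summands are concave but not strictly concave at $\tau_i=0$ (this does not affect uniqueness of the maximizer for $c>0$), and you should note the degenerate case $c=0$, where $\tau^*=\boldsymbol{0}$ is still consensual.
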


\begin{proof}
    We can build the Lagrangian function of~\eqref{eq:generic_subproblem_dual} as
    \begin{align*}
    \max_{\mathbf{y}}\min_{ \mathbf{h} \in\mathcal{H}  } \min_{\boldsymbol{\eta}} \max_{\substack{  {\tau}_i \geq 0 \\ i \in (1,\cdots,m)  }} \left\lbrace  \langle \mathbf{g}, \mathbf{h}\rangle {+} \frac{1}{2} \langle \mathbf{H}\mathbf{h},\mathbf{h}\rangle  {+} \frac{N}{2}\sum_{i{=}1}^{m} \|\mathbf{h}_i\|^2 \tau_i {-}\frac{2N}{3m} \sum_{i{=}1}^{m} \tau_i^3 {-}  \langle \mathbf{y}, A\mathbf{h}\rangle   {+} \langle \boldsymbol{\eta}, B\boldsymbol{\tau}\rangle   \right\rbrace.
    \end{align*}
    Thus, the first order optimality conditions are
    \begin{subequations}\label{eq:52}
        \begin{align}
        \label{opt_cond21} A\mathbf{h} & {=} 0,  \\
        \label{opt_cond22} B\boldsymbol{\tau} & {=} 0 , \\
        \label{opt_cond23} \mathbf{g} {+} \left( \mathbf{H} {+} M\mathbf{T}\right) \mathbf{h} {-} A^\intercal\mathbf{y}  &  {=} 0 ,\\
        \label{opt_cond24} \frac{n}{4}\|\mathbf{h}_i\|^2 {-} \tau_i^2 {+} A^\intercal\boldsymbol{\eta} & {=} 0  .
        \end{align}
    \end{subequations}
    Initially, note that~\eqref{opt_cond21} and ~\eqref{opt_cond22} guarantee that all entries of both $\mathbf{h}$ and $\boldsymbol{\tau}$ are equal respectively. This fact, along side ~\eqref{opt_cond24} implies that all the entries of $A^\intercal\boldsymbol{\eta}$ are equal as well. Thus, it follows that $
    A^\intercal\boldsymbol{\eta} {=} \alpha \boldsymbol{1}
    $
    for some value of $\alpha$. It is enough to show that this is true if and only if $\alpha{=}0$.
    
    If $\alpha {=} 0$, then it implies that all entries of $\boldsymbol{\eta}$ are equal to each other, and the solution to both problems are equivalent.  Now assume $\alpha \neq 0$. Initially, we can write the matrix $A$ as $A {=} V \Lambda V^\intercal$ as its eigenvalue decomposition. Thus,
    \begin{align*}
    V\Lambda V^\intercal \boldsymbol{\eta} {=} \alpha \boldsymbol{1} \quad \text{and}, \quad
    \Lambda V^\intercal \boldsymbol{\eta} {=} \alpha V \boldsymbol{1}.
    \end{align*}
    Given that the vector $\boldsymbol{1}$ is the corresponding eigenvector for the eigenvalue $\Lambda_{1,1} {=} 0$, it follows that $
    \Lambda V^\intercal \boldsymbol{\eta} {=} \alpha \boldsymbol{e}_1
    $
    where $\boldsymbol{e}_1$ is the zeroes vector with entry $1$ in its position \mbox{$i{=}1$}. Which implies that $\Lambda_{1,1}[V^\intercal \boldsymbol{\eta}]_1 {=} \alpha$, and since $\Lambda_{1,1} {=} 0$, the only solution is $\alpha {=} 0$, which is a contradiction.
\end{proof}

Now, we are ready to focus on the design of a distributed algorithm for Problem~\eqref{eq:generic_subproblem_dual2}. First, we can define the Lagrangian dual function, for the consensus constraints in $\mathbf{h}$ as
\begin{align*}
\varphi(\mathbf{y}) &{=} \min_{\mathbf{h} \in\mathcal{H} } \max_{\substack{  {\tau}_i \geq 0 \\ i \in (1,\cdots,m)  }} \left\lbrace  \langle \mathbf{g}, \mathbf{h}\rangle {+} \frac{1}{2} \langle (\mathbf{H}{+} N \mathbf{T} \mathbf{h},\mathbf{h}\rangle  {-}\frac{2N}{3m} \sum_{i{=}1}^{m} \tau_i^3  {-} \langle \mathbf{y}, A \mathbf{h} \rangle \right\rbrace,
\end{align*}
where $\mathbf{T} {=} blkdiag(\tau_1\mathbf{I}_n,\cdots, \tau_m\mathbf{I}_n)$, and the dual problem is defined as $\max_{\mathbf{y}} \varphi(\mathbf{y})$.

The dual problem has a number of important properties whose structure we can exploit. For example, since the function $F$ has $M_1${-}Lipschitz gradient, then it follows that the dual function $\varphi(\mathbf{y})$ is $\mu_{\varphi}${-}strongly convex on $\ker(A^T)^{\perp}$ where $\mu_\varphi {=} (\lambda^{+}_{\text{min}}(A^\intercal A)/M_1)$~\cite[ Lemma~$3.1$]{Beck2014}, \cite[Proposition $12.60$]{rockafellar2011variational}, \cite[Theorem~$1$]{nes05}, \cite[Theorem~$6$]{Kakade2009a}.
Moreover, it follows from Demyanov{-}Danskin's theorem~\cite[Proposition $4.5.1$]{Bertsekas2003},  
that \mbox{$\nabla \varphi(\mathbf{y}) {=} A\mathbf{h}^*(A^T\mathbf{y}) $} where $\mathbf{h}^*(A^Ty)$ denotes the unique solution 
of the inner maximization problem 
\begin{align}\label{eq:dual_auxiliary}
\mathbf{h}^*(A^\intercal\mathbf{y})  {=} \argmin_{\mathbf{h}\in\mathcal{H}}  \max_{\substack{  {\tau}_i \geq 0 \\ i \in (1,\cdots,m)  }}  \left\lbrace  \langle \mathbf{g}, \mathbf{h}\rangle {+} \frac{1}{2} \big\langle \left( \mathbf{H} {+} N\mathbf{T}\right) \mathbf{h},\mathbf{h} \big\rangle {-} \frac{2N}{3m} \sum_{i{=}1}^{n}\tau_i^{3}  {-} \langle \mathbf{y}, A\mathbf{h}\rangle  \right\rbrace. 
\end{align}

\subsection{Primal-Dual Properties for Distributed Implementation over Networks}\label{subsec:dec}

At this point, we observe some properties that make the reformulation~\eqref{eq:generic_subproblem_dual2} amenable for a distributed implementation over a network.

The solution of~\eqref{eq:dual_auxiliary} can be computed using local information only at each node, i.e., $\mathbf{h}^*(A^T\mathbf{y})^\intercal {=} [\mathbf{h}^*_1([A^T\mathbf{y}]_1)^\intercal,\cdots,\mathbf{h}^*_m([A^T\mathbf{y}]_m)^\intercal]$ where
    \begin{align}\label{eq:prop1}
    &\mathbf{h}^*_i([A^\intercal\mathbf{y}]_i)  {=} \argmin_{\bar{\mathbf{h}}\in \bar{\mathcal{H}}}  \max_{  {\tau}_i \geq 0}  \left\lbrace  \langle \mathbf{g}_i, \bar{\mathbf{h}}\rangle {-} \frac{2N}{3m} \tau_i^{3} {+} \frac{1}{2} \big\langle \left( \mathbf{H}_i {+} N\tau_i\mathbf{I}_n\right) \bar{\mathbf{h}},\bar{\mathbf{h}} \big\rangle  {-} \langle \mathbf{\mathbf{y}_i}, [A\bar{\mathbf{h}}]_i\rangle  \right\rbrace,
    \end{align}
with the set $\bar{\mathcal{H}}$ is the corresponding marginal set for a single agent only.

 The gradient $A\mathbf{h}^*(A^T\mathbf{y})$ can be computed distributively if the matrix $A$ has the same sparsity pattern as the network. Suppose that $[A]_{ij} \neq 0$ if and only if $(j,i)\in E$. Then, each entry $ [A\mathbf{h}^*(A^T\mathbf{y})]_i$ to be used by an agent $i \in V$, corresponds to a weighted sum of the $ \mathbf{h}^*_j([A^T\mathbf{y}]_j)$ for all other nodes $j\in V$ such that $(j,i)\in E$. That is, the information an agent requires to take gradient steps is available to him via network communications. \textit{The dual function gradient computation corresponds to a communication round over the network.}
 
 Recall that a function is called dual{-}friendly~\cite[Definition $2$]{Uribe2018a}, if we can ``efficiently'' compute (in a closed form or by polynomial time algorithms) a solution to~\eqref{eq:prop1}. In this subsection, we show that our cubic regularized second{-}order approximation~\eqref{eq:aux_subproblem} is indeed dual{-}friendly.

Initially, let us write the optimality conditions of $\eqref{eq:prop1}$. The optimal point $\mathbf{h}^*_i([A^T\mathbf{y}]_i)$ is a solution to the following systems of nonlinear equations $ \mathbf{g}_i {+} \left( \mathbf{H}_i {+} N\tau_i\mathbf{I}_n \right)\mathbf{h}_i {-} [A\bar{\mathbf{h}}]_i $ and $
    {m}/{4}\|\mathbf{h}_i\|^2 {-} \tau_i^2  {=} 0 $. It follows that $\mathbf{h}_i  {=} \left( \mathbf{H}_i {+} N\tau_i\mathbf{I}_n \right)^{{-}1}\left( [A\bar{\mathbf{h}}]_i {-} \mathbf{g}_i\right)$. Moreover, suppose that the matrix $\mathbf{H}_i $ has an eigendecomposition $\mathbf{H}_i {=} U_i^\intercal\Lambda_i U_i$,
where $\Lambda_i$ is a diagonal matrix of eigenvalues $s_1\leq \hdots \leq s_d$ and $U_i$ is an orthonormal matrix of associated eigenvectors. Then $
\mathbf{h}_i  {=}  U_i^\intercal\left( \Lambda_i {+} N\tau_i\mathbf{I}_n \right)^{{-}1}U_i\left( [A\bar{\mathbf{h}}]_i {-} \mathbf{g}_i\right)$. Furthermore, we have $
\|\mathbf{h}_i\|^2  {=} \| U_i^\intercal\left( \Lambda_i {+} N\tau_i\mathbf{I}_n \right)^{{-}1}U_i\left( [A\bar{\mathbf{h}}]_i {-} \mathbf{g}_i\right)\|^2$ and $\| \mathbf{h}_i\|^2 {=} \sum_{j{=}1}^{d} {\gamma_j^2}/{(s_j {+} N\tau_i)^2}$, where $\gamma_j=[U_i\left( [A\bar{\mathbf{h}}]_i {-} \mathbf{g}_i\right)]_j$. Therefore, each agent needs to solve the following nonlinear equation: ${\sqrt{m}}/{2}\|\mathbf{h}_i\| {-} \tau_i {=} 0$. 
However, \cite{conn2000trust} suggest that a simpler approach is to solve the secular equation $
({2}/{\sqrt{m}}){1}/{\|\mathbf{h}_i\|} {-} {1}/{\tau_i} {=} 0
$. 
A comprehensive account of how to efficiently solve the above equation can be found in~\cite[Chapter $7$, Algorithm $7.3.1$]{conn2000trust}, or in~\cite{carmon2018analysis}. Thus, we assume that each agent can locally and efficiently find a solution.

The dual function $\varphi(\mathbf{y})$ is strongly convex on a defined subspace, but it is non{-}smooth. One can use traditional approaches for non{-}smooth minimization\cite{goffin1977convergence,shor2012minimization}. However, we make the design choice of exploiting the max structure of the function by using Nesterov's dual smoothing approach~\cite{nes05} which has been shown optimal for the problem class of non{-}smooth minimization, specially for dual-friendly problems. To do so, we define a regularized problem 
\begin{align}\label{eq:dual_regu}
\hat\varphi(\mathbf{y}) &{=} \min_{\mathbf{h} \in\mathcal{H} } \max_{\substack{  {\tau}_i \geq 0 \\ i \in (1,\cdots,m)  }} \left\lbrace  \langle \mathbf{g}, \mathbf{h}\rangle {+} \frac{1}{2} \langle (\mathbf{H}{+} N \mathbf{T} \mathbf{h},\mathbf{h}\rangle   {-}\frac{2N}{3m} \sum_{i{=}1}^{m} \tau_i^3  {-} \langle \mathbf{y}, A \mathbf{h} \rangle  {+} \frac{\hat{\mu}}{2}\|\mathbf{h}\|^2\right\rbrace,
\end{align}
where we have added a quadratic term to our cost function to induce smoothness in the dual space. Moreover, an appropriate selection of $\hat{\mu}$ can provide bounds that relate to the original non{-}regularized function, see~\cite[ Proposition $5.2$]{Uribe2018a}, and~\cite[ Lemma $3$]{gas16b}. Particularly, if $\hat\mu \leq \delta/(2R^2)$, where $R {=} \|\mathbf{h}^*(A^\intercal\mathbf{y}^*)\|$, and $\mathbf{y}^*$ denotes the smallest norm solution of the non regularized problem, and $\delta >0$ is the desired accuracy. Then, an approximate solution point $\hat{\mathbf{y}}$ such that $\hat\varphi(\hat{\mathbf{y}}) {-} \hat\varphi^* \leq \delta/2$ implies  $\varphi(\hat{\mathbf{y}}) {-} \varphi^* \leq \delta$, where $\hat\varphi^*$ and $\varphi^*$ are the optimal values of the regularized and non{-}regularized functions respectively. Therefore, the smoothed dual function $\hat \varphi (\mathbf{y})$ is $\mu_{\hat{\varphi}}${-}strongly concave and has $M_{\hat{\varphi}}${-}Lipschitz continuous gradients, where $\mu_{\hat{\varphi}}{=}{\lambda_{\min}^{{+}}(A^\intercal A)}/{(L_0 {+} \hat\mu)}$ and $M_{\hat{\varphi}}{=}{\lambda_{\max}(A^\intercal A)}/{\hat{\mu}}$. Having a strongly convex function with Lipschitz gradient allows for the use of traditional Fast Gradient Methods~\cite{nes13}. More importantly, this regularization approach do not affect the decentralization properties.

\subsection{Communication Complexity of the Cubic Approximate Solver}\label{subsec:comm_comp}

In this subsection, build upon recently develop dual{-}based optimal algorithms for dual{-}friendly functions~\cite{Uribe2018a} to provide an approximate solution to the auxiliary Subproblem~\eqref{eq:dual_regu}. 

\begin{theorem}\label{thm:complexity_inner}
    Let Assumptions~ \ref{assum:smooth1}, and \ref{assum:smooth2} hold. For any $\delta >0$, set the number of iterations in~Algorithm~\ref{alg:approx_subproblem} as
        \begin{align*}
        T {\geq} 2\sqrt{\left(\frac{2M_1 R^2_\varphi}{\delta}{{+}}1\right)\chi(W)}\log\left(\frac{8\sqrt{2}\lambda_{{\max}}(W)R^2_\varphi R_{\mathbf{h}}^2}{\delta^2}\right),
        \end{align*}
    where  $\chi(W) {=} \lambda_{\max}{(W)}/ \lambda_{\min}^{{{+}}}{(W)}$, $R_{\mathbf{h}} = O( \|\mathbf{h}^* - \mathbf{h}^*
    (0)\|)$, $R_\varphi = O( \|\mathbf{w}^*\|)$ are bounds on the distance to the optimal solution and the initial point for the primal and dual variables. Then,  the output $ \mathbf{h}^*(\tilde{\mathbf{w}}_T)$ of Algorithm~\ref{alg:approx_subproblem} is an $(2\delta,\delta/R_\varphi)$-approximate solution of~\eqref{eq:generic_subproblem}.
\end{theorem}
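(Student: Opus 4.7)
The plan is to analyze Algorithm~\ref{alg:approx_subproblem} as an accelerated gradient method applied to the smoothed dual function $\hat\varphi(\mathbf{y})$ defined in~\eqref{eq:dual_regu}, and then translate the resulting dual suboptimality back into the primal $(\text{objective}, \text{feasibility})$ guarantee claimed. The key structural facts already established in the excerpt are: (i) by Lemma~\ref{lemma:no_need_tau}, the maximization over $\tau$ can be performed without enforcing consensus on $\tau$, so the inner problem~\eqref{eq:prop1} separates across agents; (ii) the inner maximizer $\mathbf{h}^*(A^\intercal \mathbf{y})$ can be computed locally by solving a scalar secular equation, which is precisely what Lines $8$--$11$ of Algorithm~\ref{alg:approx_subproblem} implement; and (iii) $\hat\varphi$ is $\mu_{\hat\varphi}$-strongly concave on $\ker(A^\intercal)^\perp$ with $M_{\hat\varphi}$-Lipschitz gradient where $\mu_{\hat\varphi}={\lambda_{\min}^{+}(A^\intercal A)}/{(L_0+\hat\mu)}$ and $M_{\hat\varphi}={\lambda_{\max}(A^\intercal A)}/{\hat\mu}$.

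The plan then proceeds as follows. First, I would identify Lines $12$--$16$ of Algorithm~\ref{alg:approx_subproblem} as Nesterov's accelerated scheme for strongly convex smooth functions with parameter $q={\hat\mu}/{(M_1{+}\hat\mu)} \cdot {\lambda_{\min}^{+}(W)}/{\lambda_{\max}(W)} = \mu_{\hat\varphi}/M_{\hat\varphi}$, so the standard linear-in-$\sqrt{q}$ convergence bound gives $\hat\varphi^* - \hat\varphi(\mathbf{w}_t) \leq 2 M_{\hat\varphi}\|\mathbf{w}_0-\mathbf{w}^*\|^2 \bigl(1-\sqrt{q}\bigr)^t$. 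Inverting this inequality and using $\|\mathbf{w}_0-\mathbf{w}^*\|\le R_\varphi$, a suboptimality $\hat\varphi^* - \hat\varphi(\mathbf{w}_T) \leq \delta^2/(8\lambda_{\max}(W)R_{\mathbf{h}}^2)$ is achieved after $T \geq 2\sqrt{1/q}\log\bigl(8\sqrt{2}\,\lambda_{\max}(W)R_\varphi^2 R_{\mathbf{h}}^2/\delta^2\bigr)$ iterations. Substituting $\hat\mu=\delta/(2R_\varphi^2)$ converts $1/q$ into $(2M_1 R_\varphi^2/\delta + 1)\chi(W)$, matching the iteration count in the statement.

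The second half of the proof is the primal recovery step. From the smoothing relation in~\cite[Prop.~5.2]{Uribe2018a}, a $\delta/2$-optimal dual iterate of $\hat\varphi$ is a $\delta$-optimal dual iterate of the non-regularized $\varphi$, because $\hat\mu R_{\mathbf{h}}^2/2 \leq \delta/2$ by the choice of $\hat\mu$. Standard primal-dual machinery for dual-friendly problems then converts a $\delta$-dual error simultaneously into (a) a primal objective gap of at most $2\delta$ for the reconstruction $\mathbf{h}^*(\tilde{\mathbf{w}}_T)$ and (b) a consensus-violation bound $\|A\,\mathbf{h}^*(\tilde{\mathbf{w}}_T)\|\leq \delta/R_\varphi$. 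The objective bound uses weak duality together with the strong concavity of $\hat\varphi$ on the feasible subspace to control $\boldsymbol{\Phi}(\mathbf{h}^*(\tilde{\mathbf{w}}_T))-\boldsymbol{\Phi}^*$; the feasibility bound uses that $\nabla \hat\varphi(\mathbf{y})=A\,\mathbf{h}^*(A^\intercal\mathbf{y})$ together with the strong concavity estimate $\|\nabla \hat\varphi(\tilde{\mathbf{w}}_T)\|^2\le 2M_{\hat\varphi}(\hat\varphi^*-\hat\varphi(\tilde{\mathbf{w}}_T))$, after substituting $M_{\hat\varphi}=\lambda_{\max}(W)/\hat\mu$ and the prescribed $\hat\mu$.

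The main obstacle I expect is a clean, simultaneous translation of dual suboptimality into \emph{both} the $(2\delta)$ objective bound and the $(\delta/R_\varphi)$ feasibility bound with matched constants: a crude argument would give one or the other but not both at the same target accuracy, forcing a worse dependence on $\delta$ in the logarithm. The correct accounting requires carefully splitting the error into a ``smoothing'' piece controlled by $\hat\mu R_{\mathbf{h}}^2$ and an ``optimization'' piece controlled by the accelerated-method residual, tuning $\hat\mu = \delta/(2R_\varphi^2)$ so that both pieces contribute at most $\delta/2$, and then invoking the gradient-norm bound on $\nabla \hat\varphi$ to recover the constraint-violation estimate with the same $\delta$. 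Once those pieces are aligned, the iteration count follows directly from the linear rate of Nesterov's method with parameter $q$.
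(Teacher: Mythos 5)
Your first half is a from-scratch re-derivation of what the paper simply cites as a black box: the paper's proof of Theorem~\ref{thm:complexity_inner} invokes \cite[Theorem 5.3]{Uribe2018a} to get, after the stated $T$, a point with $\boldsymbol{\Phi}(\mathbf{h}^*(\tilde{\mathbf{w}}_T))-\boldsymbol{\Phi}_0^*\leq\delta$ and $\|A\mathbf{h}^*(\tilde{\mathbf{w}}_T)\|\leq\delta/R$, where $\boldsymbol{\Phi}_0^*$ is the optimum under the \emph{exact} constraint $A\mathbf{h}=\boldsymbol{0}$. Your accounting of the accelerated scheme on the smoothed dual, the identification $q=\mu_{\hat\varphi}/M_{\hat\varphi}$, and the smoothing/optimization error split is the correct mechanism behind that cited result, so this portion is sound (and more self-contained than the paper's).

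The genuine gap is in the second half. The paper's entire technical content in this proof --- the part it flags as ``some technical aspects we have to take care of'' --- is Proposition~\ref{prop:constrains}: the guarantee delivered by the dual method is measured against $\boldsymbol{\Phi}_0^*$, but the subproblems in Lines 10 and 13 of Algorithm~\ref{alg:main} are posed over the \emph{relaxed} set $Q^m\cap\mathcal{Q}_{\tilde\varepsilon}$, so one must compare $\boldsymbol{\Phi}_0^*$ with $\boldsymbol{\Phi}_{\tilde\varepsilon}^*$. Proposition~\ref{prop:constrains} gives the sensitivity bound $y_\varepsilon^*\varepsilon\leq f_0^*-f_\varepsilon^*\leq y_0^*\varepsilon$, and setting $\tilde\varepsilon=\delta/R$ yields $\boldsymbol{\Phi}(\mathbf{h}^*(\tilde{\mathbf{w}}_T))-\boldsymbol{\Phi}_{\tilde\varepsilon}^*\leq\delta+\tilde\varepsilon\|\mathbf{y}_0^*\|\leq 2\delta$; that is where the factor $2$ in the statement comes from. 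Your proposal attributes the $2\delta$ instead to the smoothing-plus-duality-gap bookkeeping, never mentions the perturbed constraint set, and therefore produces a guarantee referenced to the wrong optimal value for the way the theorem is used downstream. Concretely: your argument as written establishes (at best) a $(\delta,\delta/R_\varphi)$ or $(2\delta,\delta/R_\varphi)$ statement relative to $\boldsymbol{\Phi}_0^*$, and is silent on the constraint-perturbation step that the paper's proof treats as its main point. To close the gap you would need to add the Lagrangian sensitivity argument of Proposition~\ref{prop:constrains} (or an equivalent bound on $f_0^*-f_\varepsilon^*$ in terms of the dual multiplier) and then verify that the extra term $\tilde\varepsilon\|\mathbf{y}_0^*\|$ is absorbed into the claimed $2\delta$.
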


The result in Theorem~\ref{thm:complexity_inner} shows that $\tilde O(\sqrt{\chi(W)/\delta})$ communication rounds on the network are needed to reach an $(\delta,\delta/R)$ approximate solution of~\eqref{eq:aux_subproblem}.   Moreover, this can be done in a fully distributed manner. 

\begin{remark}
    Note that we have followed one particular approach in~\cite{Uribe2018a} to solve the smooth inner problem. However, there are other algorithms with similar convergence rate guarantees, for example, ~\cite{lan2018communication,ye2020multi,li2018sharp}. It follows from~\cite[Lemma 1]{nesterov2015universal}, \cite[Lemma 1]{dvurechensky2017gradient}, or \cite[Corollary 18.14]{bauschke2011convex} that uniform convexity of the function $\boldsymbol{\Phi}(\cdot)$ implies H\"older continuity of the dual function $\varphi(\cdot)$, with order $\nu = 1/2$ and parameter $M_\nu = 1/\sqrt{\sigma_0}$. Therefore, one can use more sophisticated methods~\cite{yashtini2016global} to improve the communication complexity for the solution of the sub-problem~\eqref{eq:generic_subproblem}. For example, the recently proposed Universal Intermediate Gradient Method~\cite{Kamzolov2020}.
\end{remark}

\section{Proof of Theorem~\ref{thm:main_result}: Inexactness in the Estimate Sequence Approach for Cubic Regularization}\label{sec:baes}

Our goal in this section is to prove Theorem~\ref{thm:main_result}, we extend the results of estimate sequences of Baes~\cite{baes2009estimate} to take into account inexactness coming from approximate solutions of the auxiliary subproblems and provide a communication complexity to Algorithm~\ref{alg:main}. To do so, we start with Algorithm~\ref{alg:baes_modified}, which is a modified version of Baes' Cubic Regularized method. The main difference between Algorithm~\ref{alg:baes_modified} and Baes' constrained cubic regularized Newton's method~\cite[Algorithm 4.1]{baes2009estimate} is that we define inexactness in both subproblems according to Definition~\ref{def:approx_sol}. That is, in terms of distance to optimality measured by function value. We are allowed to make such analysis due to the specific structure induced by the problems we are required to solve and the algorithms we have available for computing such an approximate solution.

\begin{algorithm}[ht!]
	\caption{Modified Baes' Cubic Regularization}
	\label{alg:baes_modified}
	\begin{algorithmic}
		\STATE {\bfseries Input:} $\hat x_0{=} 0$, $\hat \nu_0{=} \hat x_0$, $\lambda_0{=} 1$.
		\STATE \hspace{1cm} $\phi_0(x) {=} f(\hat x_0) {+} M_2\|x{-}\hat x_0\|^3/6$.
		\STATE \hspace{1cm} Number of iterations $K$.
		\FOR{$k{=}1,\cdots K$ }        
		\STATE Find $\alpha_k$ such that $12\alpha_k^3 {=} (1{-}\alpha_k)\lambda_k$.
		\STATE Set $\lambda_{k{+}1} {=} (1{-}\alpha_k)\lambda_k$.
		\STATE Set $z_{k} {=} \alpha_k \hat \nu_k {+} (1{-}\alpha_k)\hat x_k$.
		\STATE \textit{Approximately solve:}
		\STATE Set $\hat x_{k{+}1}     \approx_{\delta_k^f} \argmin\limits_{x \in Q} \hat{f}(z_k,x)$.
		\STATE Set $\phi_{k{+}1}(x) {=} (1{-}\alpha_k)\phi_k(x) + \alpha_k \big( f(\hat x_{k{+}1}) +  \langle \nabla f(\hat x_{k{+}1}),x{-} \hat x_{k{+}1} \rangle \big)$.
		\STATE \textit{Approximately solve:}
		\STATE Set $\hat \nu_{k{+}1}     \approx_{\delta_k^\phi} \argmin\limits_{x \in Q} \phi_{k{+}1}(x)$.
		\ENDFOR
	\end{algorithmic}
\end{algorithm}

The idea of estimate sequences was first introduced by~\cite{nes83,nes05} and later extended in~\cite{Auslender2006}. Baes~\cite{baes2009estimate} shed some light on the use of estimate sequences for the design of high{-}order optimization algorithms that generalized first{-}order methods. We follow the estimate sequence approach in \cite{baes2009estimate} to prove the convergence rate properties of Algorithm~\ref{alg:main}.

For simplicity of notation, we will consider the generic problem $\min_{x\in Q } f(x)$ for a compact, convex and bounded set, and a convex function $f$ with $M_1$-Lipschitz gradient and $M_2$-Lipschitz Hessian. To do so, we provide a slightly modified cubic regularized Newton method based on estimated sequences, introduced in Algorithm~\ref{alg:baes_modified}. Later on, we will provide the specific result Problem~\eqref{eq:main2}. 

Note that Algorithm \ref{alg:baes_modified} is different from the cubic regularized Newton method proposed in~\cite{baes2009estimate} because it is stated in terms of function value suboptimality in both the subproblems.

Recall a couple of definitions and properties for estimate sequences.

\begin{definition}[Chapter $2$ in \cite{nes13}]\label{def:estimate}
    An estimate sequence for the function $f(x)$ is a sequence of convex functions $(\phi_k)_{k\geq 0}$ and a sequence of positive numbers $(\lambda_{k})_{k\geq 0}$ satisfying: $\lim_{k\to 0} \lambda_k {{=}}0$ and $\phi_k(x) \leq (1{{-}}\lambda_k)f(x) {{+}} \lambda_k\phi_0(x)$ , 
    for all $x \in Q$ for $k\geq 1$.
\end{definition}

Estimate sequences provide an understanding of the convergence rate of a sequence of iterates generated by some arbitrary algorithm, as described in the next proposition. 

\begin{proposition}[Adapted from Proposition $2.1$ in~\cite{baes2009estimate}]\label{prop:estimate_sequence}
    Suppose that a sequence of iterates $(x_k)_{k\geq 0}$ in $Q$ satisfies $f(x_k) -\varepsilon \leq \min_{x\in Q} \phi_k(x)$, and $\varepsilon >0$. Then, $f(x_k) {-} f^* \leq \lambda_k(\phi_0(x^*) {-}f^*) + \varepsilon$ for $k \geq 1$.
\end{proposition}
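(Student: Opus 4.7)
The plan is to chain three elementary inequalities. First, I would apply the defining inequality of an estimate sequence (Definition~\ref{def:estimate}) at the point $x^* \in Q$ where $f$ attains its minimum $f^*$; since the paper assumes the minimum is attained in the interior of $Q$, this evaluation is legitimate and gives
\[
\phi_k(x^*) \leq (1-\lambda_k)f(x^*) + \lambda_k \phi_0(x^*) = (1-\lambda_k)f^* + \lambda_k \phi_0(x^*).
\]

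Next, the hypothesis of the proposition states $f(x_k) - \varepsilon \leq \min_{x\in Q}\phi_k(x)$, and trivially $\min_{x\in Q}\phi_k(x) \leq \phi_k(x^*)$ since $x^* \in Q$. Combining this with the bound above yields
\[
f(x_k) - \varepsilon \leq \phi_k(x^*) \leq (1-\lambda_k)f^* + \lambda_k \phi_0(x^*),
\]
and adding $\varepsilon$ then subtracting $f^*$ from both sides produces exactly the claimed inequality $f(x_k) - f^* \leq \lambda_k(\phi_0(x^*) - f^*) + \varepsilon$.

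Candidly, there is little real obstacle here: the proposition is essentially a bookkeeping lemma that converts the invariant $f(x_k) - \varepsilon \leq \min_{x\in Q}\phi_k(x)$ into a convergence guarantee. The only subtlety compared with the exact statement in Baes~\cite{baes2009estimate} is the additive $\varepsilon$ slack, which propagates cleanly through the chain because it enters additively in the hypothesis and is preserved under the two successive upper bounds. The substantive work — namely, showing inductively that the invariant $f(x_k) - \varepsilon \leq \min_{x\in Q}\phi_k(x)$ is actually maintained by the inexact updates of Algorithm~\ref{alg:baes_modified} and translating the per-iteration tolerances $\delta_k^f$ and $\delta_k^\phi$ into the required global $\varepsilon$ — is deferred to the subsequent lemmas in this section, as sketched in the proof outline following Theorem~\ref{thm:main_result}.
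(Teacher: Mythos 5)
Your proof is correct and is essentially the paper's own argument: both chain the hypothesis $f(x_k)-\varepsilon \leq \min_{x\in Q}\phi_k(x)$ with the estimate-sequence inequality of Definition~\ref{def:estimate} evaluated at $x^*$ and then rearrange. The only cosmetic difference is that the paper first bounds $\min_{x\in Q}\phi_k(x)$ by $\min_{x\in Q}\{f(x)+\lambda_k(\phi_0(x)-f(x))\}$ before specializing to $x^*$, whereas you specialize immediately; the two are interchangeable.
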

\begin{proof}
    It follows from the Definition~\ref{def:estimate} that
    \begin{align*}
    f(x_k){-}\varepsilon&\leq \min_{x\in Q} \phi_k(x) \leq \min_{x\in Q} f(x) {+} \lambda_k(\phi_0(x) {-} f(x)) \\
    &\leq  f^* + \lambda_k(\phi_0(x^*) {-} f(x^*)).
    \end{align*}
\end{proof}

For example, if Assumption~\ref{assum:smooth2} holds, one useful way to construct an estimate sequence is:
\begin{align}\label{eq:estimates}
\phi_0(x) &{=} f(x_0) {+}  {M_2}/{6}\|x{-} x_0\|^3, \quad \text{and} \quad
\phi_k(x)  {=} (1{-}\alpha_k)\phi_k {+} \alpha \left(f(y_k){+} \langle \nabla f(y_k), x{-}y_k \rangle\right)
\end{align} 
for a given starting point $x_0\in Q$ and an appropriate choice of $(\alpha_k)_{k \geq 0}$ and $(y_k)_{k \geq 0}$. Additionally, $\lambda_0 =1$, and $\lambda_{k+1}= \lambda_k(1{-}\alpha_k)$ for a sequence $(\alpha_k)_{k\geq 0}$ whose sum diverges.

Proposition~\ref{prop:estimate_sequence} provides an insight, which as pointed out in~\cite{baes2009estimate}, indicates that one key element in the use of estimate sequences is for an algorithm to be able to construct a sequence $(x_k)_{k \geq 0}$ for which $f(x_k) \leq \min_{x\in Q} \phi_k(x)$ holds or $f(x_k) -\varepsilon \leq \min_{x\in Q} \phi_k(x)$ in the inexact case.

\begin{proposition}\label{prop:primal_dual}
Let $\boldsymbol{\Phi}(\mathbf{h})$ be defined in~\eqref{eq:generic_subproblem}, then it holds that for all $x,y\in \mathcal{H} $
\begin{align}\label{eq:prop_subproblem}
\frac{L_0 +M_1}{2}\|y-x\|^2 \geq \boldsymbol{\Phi}(y) - \boldsymbol{\Phi}(x) - \langle \boldsymbol{\Phi}'(x),y-x \rangle \geq \frac{\sigma_0}{3}\|y-x\|^3,
\end{align}
where $L_0 = M_2D_Q$, and $\sigma_0 = M_2/{6}$.
\end{proposition}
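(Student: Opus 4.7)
The plan is to expand the Bregman-like quantity
$D(y,x) := \boldsymbol{\Phi}(y) - \boldsymbol{\Phi}(x) - \langle \boldsymbol{\Phi}'(x),\, y-x\rangle$
by writing $\boldsymbol{\Phi}'(h) = \mathbf{g} + \mathbf{H}h + \tfrac{N}{2}\|h\|\,h$ and noting that the linear piece contributes zero. A direct computation gives the clean splitting
\begin{align*}
D(y,x) = \tfrac{1}{2}\langle \mathbf{H}(y-x),\, y-x\rangle \;+\; \tfrac{N}{6}\Bigl(\|y\|^3 - \|x\|^3 - 3\|x\|\langle x,\, y-x\rangle\Bigr).
\end{align*}
So the two inequalities of \eqref{eq:prop_subproblem} reduce to bounding a quadratic and a ``cubic Bregman'' term independently, exploiting convexity of $F$ (which makes $\mathbf{H}\succeq 0$) and the uniform convexity of the power-$3$ norm.

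For the upper bound, I would control each piece by Taylor's theorem. By Assumption~\ref{assum:smooth1} the spectral norm of $\mathbf{H}=\nabla^2 F(\mathbf{z})$ is at most $M_1$, giving $\tfrac{1}{2}\langle \mathbf{H}(y-x),y-x\rangle\leq \tfrac{M_1}{2}\|y-x\|^2$. For the cubic piece, the Hessian of $\tfrac{N}{6}\|h\|^3$ is $\tfrac{N}{2}\bigl(\|h\|\mathbf{I} + hh^{\intercal}/\|h\|\bigr)$, whose operator norm is at most $N\|h\|$; then Assumption~\ref{assum:bounded} yields $\|h\|\leq D_Q$ on $\mathcal{H}$, so the Hessian is bounded above by $N D_Q\mathbf{I} = L_0\mathbf{I}$ (using $N = M_2$ so that $L_0 = M_2 D_Q$). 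Taylor's theorem then gives $\tfrac{N}{6}[\|y\|^3-\|x\|^3-3\|x\|\langle x,y-x\rangle]\leq \tfrac{L_0}{2}\|y-x\|^2$, and summing produces the advertised quadratic upper bound.

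For the lower bound, I would first drop the quadratic term, which is non-negative since $\mathbf{H}\succeq 0$. What remains is the uniform-convexity inequality for $\|\cdot\|^3$, namely
$\|y\|^3 - \|x\|^3 - 3\|x\|\langle x, y-x\rangle \geq c\,\|y-x\|^3$ in any Hilbert space, with the constant $c$ absorbed into $\sigma_0 = Nc/2$ so that $\sigma_0/3$ appears on the right. The main obstacle is precisely justifying this uniform convexity with the sharp constant: I would prove it by Taylor expansion along the segment $x + t(y-x)$, using that the Hessian of $\tfrac{1}{3}\|\cdot\|^3$ satisfies $\nabla^2 f(h)\succeq \|h\|\,\mathbf{I}$ and then lower-bounding the integral $\int_0^1 (1-t)\|x+t(y-x)\|\,\|y-x\|^2\,dt$ via convexity of the norm (splitting on whether $t\|y-x\|\leq \|x\|$ or not), which yields a multiple of $\|y-x\|^3$ with the required constant. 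This is the standard uniform-convexity argument underlying Nesterov's cubic-regularization framework, and once it is in hand the proposition follows immediately by combining the two term-by-term estimates.
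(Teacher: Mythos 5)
Your proposal is correct and follows essentially the same route as the paper: split the Bregman divergence of $\boldsymbol{\Phi}$ into the convex, $M_1$-smooth quadratic part and the cubic part, then use a Lipschitz-gradient bound ($L_0 = M_2 D_Q$ over the bounded set) for the upper estimate and uniform convexity of order three of the cubic term for the lower estimate. The only difference is that the paper imports the two cubic-term inequalities directly from Lemmas~5.1 and~8.2 of Baes, whereas you rederive them by Taylor expansion; your constants are consistent with (indeed slightly stronger than) the stated $\sigma_0 = M_2/6$.
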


\begin{proof}
    From~\cite[Lemma 8.2]{baes2009estimate} with $p=3$ in our case, we have that
\begin{align*}
\phi(x) = \|x-x_0\|^p
\end{align*}
then
\begin{align*}
\phi(y) - \phi(x) - \langle \phi'(x),y-x \rangle \geq c_p\|x-y\|^p
\end{align*}
where
\begin{align*}
c_p = \frac{p-1}{\big( (2p-3)^{\frac{1}{p-2}}+1 \big)^{p-2}}.
\end{align*}
Moreover, from~\cite[Lemma 5.1]{baes2009estimate} we have
\begin{align*}
\phi_0(x) = f(x_0) + \frac{M}{p!}\|x-x_0\|^p
\end{align*}
then
\begin{align*}
L_0\|y-x\|^2 \geq \langle \phi'_0(y) -\phi'_0(x) , y-x \rangle \geq \sigma_0\| y-x\|^p,
\end{align*}
where
\begin{align*}
L_0 = M\frac{D_Q^{p-2}}{(p-2)!} \qquad \text{and} \qquad \sigma_0 = 2M \frac{c_p}{p!}.
\end{align*}

    Finally, it follows from~\eqref{eq:generic_subproblem}, that $\boldsymbol{\Phi}(x) $ is uniformly strongly convex of order $p=3$. Moreover, since we assume $F(x)$ is $M_1$ smooth, we have that  $\boldsymbol{\Phi}(x) $ is $M_1 + L_0$ smooth.
\end{proof}

Next, we show that Algorithm~\ref{alg:baes_modified} builds an estimate sequence, and furthermore, one can appropriately chose the accuracy of each of the subproblems, such that the error does not accumulate and we can apply Proposition~\ref{prop:estimate_sequence} for the convergence rate analysis. In particular, and considering Problem~\eqref{eq:main2}, following a construction of an estimate sequence as suggested in~\eqref{eq:estimates}, we seek to inductively prove for the output sequence $(\mathbf{x}_k)_{k \geq 0}$ of Algorithm~\ref{alg:main}, if $\min_{\mathbf{x} \in Q^m \bigcap  \mathcal{Q}_{\tilde\varepsilon}}\phi_k(\mathbf{x})\geq F(\mathbf{x}_k) - \varepsilon$, then $\min_{\mathbf{x} \in Q^m \bigcap  \mathcal{Q}_{\tilde\varepsilon}}\phi_{k+1}(\mathbf{x})\geq F(\mathbf{x}_{k+1}) - \varepsilon$.

The next lemma provides bounds for the accuracy of solving each of the subproblems in Algorithm~\ref{alg:baes_modified} such that we can apply Proposition~\ref{prop:estimate_sequence}.

\begin{lemma}\label{lemma:main}
    Let Assumptions~\ref{assum:smooth1}, \ref{assum:smooth2} and \ref{assum:bounded} hold. Let $\varepsilon >0$ and $\gamma \in (0,1)$, and assume that for a fixed $k \geq 0$ and points $\hat{x}_k,\hat{\nu}_k \in Q$: $\min_{x\in Q} \phi_k(x) \geq f(\hat{x}_k) - \varepsilon $,
    with 
    {\small
\begin{align*}
        0&\leq \delta^\phi_k \leq \min \left\lbrace  1, \left(\frac{\left(\frac{\alpha_k \gamma}{1{-}\alpha_k}\right) \varepsilon}{1{+}D_QL_0\big(\frac{6\lambda_k^{2}}{ \sigma_0 } \big)^{1/3}}\right)^3\right\rbrace, \qquad 
        0\leq \delta^{f}_k \leq \min \left\lbrace  1, \left(\frac{\left((1{-}\gamma)\alpha_k {+}\frac{1}{2}\right)\varepsilon}{1{+}D_QL_0\big(\frac{3}{\sigma_0} \big)^{1/3}}\right)^3\right\rbrace .
        \end{align*}
    }
    Then $
    \min_{x\in Q} \phi_{k+1}(x) \geq  f(\hat{x}_{k+1})  - \varepsilon$.
\end{lemma}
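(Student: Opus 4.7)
\textbf{Proof plan for Lemma~\ref{lemma:main}.}
The plan is to run the standard Baes-style estimating-sequence induction, but track two extra error sources: the inexactness $\delta_{k-1}^{\phi}$ of $\hat\nu_k$ (which makes $\hat\nu_k$ only approximately the minimizer of $\phi_k$), and the inexactness $\delta_k^f$ of $\hat x_{k+1}$ (which prevents using the exact first-order optimality condition of the cubic subproblem). First I would record the uniform convexity consequences of Proposition~\ref{prop:primal_dual}: since $\phi_0(x)=f(x_0)+\frac{M_2}{6}\|x-x_0\|^3$ is $\sigma_0$-uniformly convex of order $3$ and $\phi_k$ is a convex combination of $\phi_0$ and linear pieces with residual weight $\lambda_k$, the function $\phi_k$ is $\lambda_k\sigma_0$-uniformly convex of order $3$; likewise $\hat f(z_k,\cdot)$ is $\sigma_0$-uniformly convex of order $3$. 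Writing $\nu_k^\star=\arg\min_{Q}\phi_k$ and $x_{k+1}^\star=\arg\min_{Q}\hat f(z_k,\cdot)$, this yields the quantitative bounds $\|\hat\nu_k-\nu_k^\star\|\le (3\delta_{k-1}^{\phi}/(\lambda_k\sigma_0))^{1/3}$ and $\|\hat x_{k+1}-x_{k+1}^\star\|\le(3\delta_k^f/\sigma_0)^{1/3}$, together with $\phi_k(\nu_k^\star)\ge f(\hat x_k)-\varepsilon$ by the inductive hypothesis.

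Next I would expand $\phi_{k+1}(x)=(1-\alpha_k)\phi_k(x)+\alpha_k[f(\hat x_{k+1})+\langle\nabla f(\hat x_{k+1}),x-\hat x_{k+1}\rangle]$, substitute the uniform-convexity lower bound $\phi_k(x)\ge \phi_k(\nu_k^\star)+\frac{\lambda_k\sigma_0}{3}\|x-\nu_k^\star\|^3$, and use convexity of $f$ in the form $f(\hat x_k)\ge f(\hat x_{k+1})+\langle\nabla f(\hat x_{k+1}),\hat x_k-\hat x_{k+1}\rangle$. Using the algorithmic identity $z_k=\alpha_k\hat\nu_k+(1-\alpha_k)\hat x_k$ lets one group the linear terms into $\langle\nabla f(\hat x_{k+1}),z_k-\hat x_{k+1}\rangle+\alpha_k\langle\nabla f(\hat x_{k+1}),x-\hat\nu_k\rangle$ plus a controlled error $\alpha_k\langle\nabla f(\hat x_{k+1}),\nu_k^\star-\hat\nu_k\rangle$ arising from replacing $\hat\nu_k$ by $\nu_k^\star$ in the cubic regulariser. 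This last term is bounded by $\alpha_k\,\|\nabla f(\hat x_{k+1})\|\,\|\hat\nu_k-\nu_k^\star\|\le \alpha_k\,L_0 D_Q\,(3\delta_{k-1}^{\phi}/(\lambda_k\sigma_0))^{1/3}$, which is exactly the form that matches the $\delta_{k-1}^{\phi}$ bound hypothesised in the lemma.

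Then I would exploit the approximate optimality of $\hat x_{k+1}$. For the exact minimizer $x_{k+1}^\star$ one has $\langle\nabla\hat f(z_k,x_{k+1}^\star),x-x_{k+1}^\star\rangle\ge 0$ for all feasible $x$, and in particular at $x=z_k$. Replacing $x_{k+1}^\star$ by $\hat x_{k+1}$ costs another error of order $\|\hat x_{k+1}-x_{k+1}^\star\|\le(3\delta_k^f/\sigma_0)^{1/3}$, controlled by the $\delta_k^f$ bound. Then converting $\nabla\hat f(z_k,\hat x_{k+1})$ to $\nabla f(\hat x_{k+1})$ via the Lipschitz Hessian inequality $\|\nabla f(\hat x_{k+1})-\nabla f(z_k)-\nabla^2 f(z_k)(\hat x_{k+1}-z_k)\|\le\frac{M_2}{2}\|\hat x_{k+1}-z_k\|^3/\|\hat x_{k+1}-z_k\|$ produces a cubic term in $\|\hat x_{k+1}-z_k\|$ that combines with the $\frac{N}{6}\|\cdot\|^3$ cubic regulariser to yield $\langle\nabla f(\hat x_{k+1}),z_k-\hat x_{k+1}\rangle\ge \tfrac12 N\|\hat x_{k+1}-z_k\|^3 - \text{(error)}$. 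Picking $N=M_2$ and using the scaling $12\alpha_k^3=(1-\alpha_k)\lambda_k$ ensures that $\frac{(1-\alpha_k)\lambda_k\sigma_0}{3}\|x-\nu_k^\star\|^3 + \tfrac12 N\|\hat x_{k+1}-z_k\|^3 \ge \alpha_k\langle\nabla f(\hat x_{k+1}),\hat\nu_k-x\rangle$ at the minimising $x$; this is the standard ``cubic compensates linear'' step in Baes' analysis.

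Finally, I would minimise over $x\in Q$ and collect the error terms. The negative contributions accumulate as $-(1-\alpha_k)\varepsilon$ (from the inductive hypothesis), a term scaling as $\alpha_k L_0 D_Q(\delta_{k-1}^{\phi}/(\lambda_k\sigma_0))^{1/3}$ from the $\hat\nu_k$ inexactness, and a term scaling as $L_0 D_Q(\delta_k^f/\sigma_0)^{1/3}+\delta_k^f$ from the $\hat x_{k+1}$ inexactness. The hypothesised bounds on $\delta_{k-1}^{\phi}$ and $\delta_k^f$ are precisely tuned so that the first error absorbs at most $\alpha_k\gamma\varepsilon$ and the second at most $(\alpha_k(1-\gamma)+\tfrac12)\varepsilon$; summing $(1-\alpha_k)\varepsilon+\alpha_k\gamma\varepsilon+\alpha_k(1-\gamma)\varepsilon=\varepsilon$ (with the extra $\tfrac12\varepsilon$ cushion compensating the $\delta_k^f$ term directly) yields $\min_{x\in Q}\phi_{k+1}(x)\ge f(\hat x_{k+1})-\varepsilon$, closing the induction. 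The main obstacle is the bookkeeping in the third step: carefully splitting the $\varepsilon$ budget across the $\alpha_k$-, $\lambda_k$-, and $\sigma_0$-dependent error scales so that the cubic regularisation absorbs the cross terms from both inexactness sources without destroying the $O(k^{-3})$ rate; everything else is a direct adaptation of Baes' estimating-sequence argument.
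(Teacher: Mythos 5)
Your plan follows essentially the same route as the paper's proof: the Baes estimating-sequence induction, with the two inexactness sources ($\hat\nu_k$ and $\hat x_{k+1}$) each converted from function-value error to a distance bound of the form $(3\delta/\sigma_0)^{1/3}$ via the uniform convexity in Proposition~\ref{prop:primal_dual}, the approximate first-order optimality condition of the cubic subproblem combined with Hessian Lipschitz continuity to pass from $\nabla\hat f(z_k,\cdot)$ to $\nabla f(\hat x_{k+1})$, Lemma~4.3 of Baes for the rescaling $\|x-\hat\nu_k\|^3\to\|x-z_k\|^3/\alpha_k^3$, and the $\gamma$-split of the $\varepsilon$ budget. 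The bookkeeping differs only in where you center the uniform-convexity lower bound (at the exact minimizer $\nu_k^\star$ rather than at $\hat\nu_k$ as the paper does), which moves the $\hat\nu_k$-error into the term $\alpha_k\langle\nabla f(\hat x_{k+1}),\nu_k^\star-\hat\nu_k\rangle$; note that your bound $\|\nabla f(\hat x_{k+1})\|\le L_0D_Q$ is not what controls that term (the natural bound is $M_1D_Q$, using that $f$ attains its minimum in the interior of $Q$), so your route would produce a slightly different constant in the $\delta^\phi_k$ threshold than the one stated in the lemma, where the $\lambda_k^{2}$ inside the cube root comes from bounding $\|\nabla\phi_k(\hat\nu_k)-\nabla\phi_k(\nu_k)\|$ by the $\lambda_kL_0$-Lipschitz gradient of the cubic part of $\phi_k$.

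The one step that would fail as written is the choice $N=M_2$. The ``cubic compensates linear'' step requires absorbing the cross term $-\tfrac{N+M_2}{2}\|z_k-\hat x_{k+1}\|^2\|x-z_k\|$ using Young's inequality, and this needs a strictly positive coefficient on $\|z_k-\hat x_{k+1}\|^3$, i.e.\ $\tfrac{N-M_2}{2}>0$; with $N=M_2$ that compensating term vanishes and the remaining $\tfrac{\lambda_{k+1}}{\alpha_k^3}\tfrac{M_2}{6}\|x-z_k\|^3$ alone cannot dominate the cross term. The paper takes $N=5M_2$, and the factor $12$ in the scaling $12\alpha_k^3=(1-\alpha_k)\lambda_k$ (equivalently $\alpha_k^3/\lambda_{k+1}\le 1/12$) is calibrated to that choice; if you keep $N=M_2$ the nonnegativity of the bracketed minimum in the final step does not hold and the induction does not close. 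Everything else in your outline matches the paper's argument.
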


\begin{proof}
    Initially, by definition of the sequence $(\phi_k(x))_{k\geq 0}$ in~\eqref{eq:estimates},
    \begin{align}\label{eq:aux1}
    & \min_{x\in Q} \phi_{k+1}(x)  = \min_{x\in Q} \{ (1-\alpha_k)\phi_k(x) + \alpha_k \big( f(\hat{x}_{k+1}) + \langle \nabla f (\hat{x}_{k+1}) , x - \hat{x}_{k+1} \rangle \big) \} \nonumber\\
    & \geq \min_{x\in Q} \{ (1-\alpha_k)\big( \phi_k (\hat{\nu}_k) + \langle \nabla \phi_k(\hat{\nu}_k),x - \hat{\nu}_k \rangle + \lambda_k \frac{M_2}{6}\|x-\hat{\nu}_k\|^3 \big)  + \nonumber\\
    & \qquad + \alpha_k \big( f(\hat{x}_{k+1}) + \langle \nabla f (\hat{x}_{k+1} ), x - \hat{x}_{k+1} \rangle \big)   \} \nonumber \\
    & = \min_{x\in Q} \{ (1-\alpha_k)\big( \phi_k (\hat{\nu}_k) + \langle \nabla \phi_k(\hat{\nu}_k),x - \hat{\nu}_k \rangle \big)+ \lambda_{k+1} \frac{M_2}{6}\|x-\hat{\nu}_k\|^3  + \nonumber \\ 
    & \qquad \alpha_k \big( f(\hat{x}_{k+1}) + \langle \nabla f (\hat{x}_{k+1}) , x - \hat{x}_{k+1} \rangle \big)   \}, 
    \end{align}
    where the inequality in the second line follows from Lemma~$8.2$ in~\cite{baes2009estimate}, that shows that $\phi_k(y) \geq \phi_k(x) + \langle \nabla \phi_k(x),y-x\rangle + \lambda_{k}{M_2}/{6}\|y-x\|^3$ for all $x,y \in Q$ and $k\geq 0$, and the last equality from the definition of $\lambda_{k+1}$.
    
    Now, we focus on bounding the term $\langle \nabla \phi_k(\hat{\nu}_k),x - \hat{\nu}_k \rangle$. Initially, by adding and subtracting $\nabla \phi_k({\nu}_k)$ we have
    \begin{align*}
    & \min_{x\in Q} \langle \nabla \phi_k(\hat{\nu}_k),x - \hat{\nu}_k \rangle = \min_{x\in Q} \langle \nabla \phi_k({\nu}_k),x - \hat{\nu}_k \rangle - \langle \nabla \phi_k({\nu}_k) - \nabla \phi_k(\hat{\nu}_k),x - \hat{\nu}_k \rangle\\
    & \qquad \geq   \min_{x\in Q} \{ \langle \nabla \phi_k({\nu}_k),x - \hat{\nu}_k \rangle  - \|\nabla \phi_k(\hat{\nu}_k) - \nabla \phi_k({\nu}_k) \| \|x - \hat{\nu}_k\|  \}\\
    & \qquad  \geq  \min_{x\in Q} \{\langle \nabla \phi_k({\nu}_k),x - {\nu}_k \rangle  + \langle \nabla \phi_k({\nu}_k),\nu_k - \hat{\nu}_k \rangle  - \|\nabla \phi_k(\hat{\nu}_k) - \nabla \phi_k({\nu}_k) \| \|x - \hat{\nu}_k\|  \},
    \end{align*}
    where the first inequality follows from Cauchy--Schwarz inequality, and the second one by adding and subtracting $\nu_k$. Next, given that the function $\phi_0$ has Lipschitz gradients with constant $M_2D_Q$ (see Lemma $5.1$ in \cite{baes2009estimate}) where $D_Q$ is the diameter of the set $Q$, it holds that
    \begin{align*}
    \min_{x\in Q} \langle \nabla \phi_k(\hat{\nu}_k),x - \hat{\nu}_k \rangle & \geq  \min_{x\in Q} \{\langle \nabla \phi_k({\nu}_k),x - {\nu}_k \rangle  + \langle \nabla \phi_k({\nu}_k),\nu_k - \hat{\nu}_k \rangle  - M_2D_Q\lambda_k \|\hat{\nu}_k - \nu_k\| D_Q   \}, \\
    & \qquad \geq \langle \nabla \phi_k({\nu}_k),\nu_k - \hat{\nu}_k \rangle  - M_2D_Q\lambda_k \|\hat{\nu}_k - \nu_k\| D_Q \\
    & \qquad \geq  \phi_k(\nu_k) - \phi_k(\hat{\nu}_k) + \lambda_k  \frac{M_2}{6}\|\nu_k-\hat{\nu}_k\|^3 - M_2D_Q\lambda_k \|\hat{\nu}_k - \nu_k\| D_Q,
    \end{align*}
    where the second inequality follows from the constrained optimality conditions for the function $\phi_k$, recall that $\nu_k$ is defined as the minimizer of $\phi_k$ on $Q$. Thus, the first-order optimality condition reads as $\langle \nabla \phi_k({\nu}_k),x - \hat{\nu}_k \rangle \geq 0$ for all $x \in Q$. The third inequality follows again from~\cite[Lemma~$8.2$]{baes2009estimate}.
    
Assuming the accuracy of the approximate solution $\hat{\nu}_k$ is such that $\phi_k(\hat{\nu}_k) - \phi_k(\nu_k) \leq \delta^\phi_k$. Then,
    \begin{align*}
    \min_{x\in Q} \langle \nabla \phi_k(\hat{\nu}_k),x - \hat{\nu}_k \rangle & \geq -\delta^\phi_k  - M_2D_Q\lambda_k \|\hat{\nu}_k - \nu_k\| D_Q,
    \end{align*}
    where we have removed the positive term in the upper bound. Moreover, we can express the last term $\|\hat{\nu}_k - \nu_k\|$ in terms of the accuracy $\delta^\phi_k$ since it follows from \cite[Lemma $5.1$]{baes2009estimate} that:
    \begin{align*}
    \delta^\phi_k \geq \phi_k(\hat{\nu}_k) - \phi_k(\nu_k) \geq \frac{\sigma_0\lambda_k}{p}\|\hat{\nu}_k - \nu_k\|^3,
    \end{align*}
    from which we obtain the bound:
    \begin{align*}
    \min_{x\in Q} \langle \nabla \phi_k(\hat{\nu}_k),x - \hat{\nu}_k \rangle & \geq -\delta^\phi_k  -L_0D_Q\lambda_k \left(\frac{p\varepsilon_\phi}{\sigma_0 \lambda_k}\right)^{1/3}.
    \end{align*}
    Furthermore, assuming $\varepsilon_\phi \leq 1$ without loss of generality, we have that
    \begin{align}\label{eq:aux2}
    \min_{x\in Q} \langle \nabla \phi_k(\hat{\nu}_k),x - \hat{\nu}_k \rangle & \geq -(\delta^\phi_k )^{1/3}\left(1+ L_0D_Q\lambda_k \left(\frac{p}{\sigma_0 \lambda_k}\right)^{1/3} \right)\geq -\bar{\delta}^\phi_k ,
    \end{align}
    for an appropriate selection of the error $\delta^\phi_k $.

Lets recall~\eqref{eq:aux1}, and use the bound~\eqref{eq:aux2}, then
\begin{align*}
&\min_{x\in Q} \phi_{k+1}(x)   \geq \min_{x\in Q} \{ (1-\alpha_k)\big( \phi_k (\hat{\nu}_k) -\bar{\delta}^\phi_k \big)+ \lambda_{k+1} \frac{M_2}{6}\|x-\hat{\nu}_k\|^3  + \\
& \qquad + \alpha_k \big( f(\hat{x}_{k+1}) + \langle \nabla f (\hat{x}_{k+1}) , x - \hat{x}_{k+1} \rangle \big)   \} \\
&  \geq (1-\alpha_k)\big( \phi_k (\hat{\nu}_k) -\bar{\delta}^\phi_k \big)+ \min_{x\in Q} \{  \lambda_{k+1} \frac{M_2}{6}\|x-\hat{\nu}_k\|^3  + \alpha_k \big( f(\hat{x}_{k+1}) + \langle \nabla f (\hat{x}_{k+1}) , x - \hat{x}_{k+1} \rangle \big)   \} \\
&\geq (1-\alpha_k)\big( \phi_k (\hat{\nu}_k)- \bar{\delta}^\phi_k \big) + \alpha_k \big( f(\hat{x}_{k+1} )+ \langle \nabla f (\hat{x}_{k+1}) , \hat{\nu}_k - \hat{x}_{k+1}\rangle \big)+ \\
& \qquad + \min_{x\in Q} \{ \lambda_{k+1} \frac{M_2}{6}\|x -\hat{\nu}_k\|^3  + \alpha_k\langle \nabla f (\hat{x}_{k+1}) , x - \hat{\nu}_k\rangle   \},
\end{align*}
where in the last inequality we have added and subtracted $\hat{\nu}_{k}$. Now, from the hypotheses that $\min_{x\in Q}\phi_k(x) \geq f(\hat{x}_k)-\varepsilon$, we have
\begin{align*}
&\min_{x\in Q} \phi_{k+1}(x) \geq (1-\alpha_k)\big( f (\hat{x}_k)-\varepsilon - \bar{\delta}^\phi_k\big) + \alpha_k \big( f(\hat{x}_{k+1} )+ \langle \nabla f (\hat{x}_{k+1}) , \hat{\nu}_k - \hat{x}_{k+1}\rangle \big) \\
& \qquad + \min_{x\in Q} \{ \lambda_{k+1} \frac{M_2}{6}\|x -\hat{\nu}_k\|^3  + \alpha_k\langle \nabla f (\hat{x}_{k+1}) , x - \hat{\nu}_k\rangle   \}, \\
&\geq (1-\alpha_k)\big(  (f(\hat{x}_{k+1}) + \langle \nabla f(\hat{x}_{k+1}) , \hat{x}_k - \hat{x}_{k+1}\rangle ) -\varepsilon - \bar{\delta}^\phi_k \big) + \alpha_k \big( f(\hat{x}_{k+1} )+ \langle \nabla f (\hat{x}_{k+1}) , \hat{\nu}_k - \hat{x}_{k+1}\rangle \big) \\
& \qquad \qquad+ \min_{x\in Q} \{ \lambda_{k+1} \frac{M_2}{6}\|x -\hat{\nu}_k\|^3  + \alpha_k\langle \nabla f (\hat{x}_{k+1}) , x - \hat{\nu}_k\rangle   \}, \\
&\geq f(\hat{x}_{k+1}) +  (1-\alpha_k)\big(  \langle \nabla f(\hat{x}_{k+1}) , \hat{x}_k - \hat{x}_{k+1}\rangle  -\varepsilon - \bar{\delta}^\phi_k \big) + \alpha_k\langle \nabla f (\hat{x}_{k+1}) , \hat{\nu}_k - \hat{x}_{k+1}\rangle \\
& \qquad \qquad+ \min_{x\in Q} \{ \lambda_{k+1} \frac{M_2}{6}\|x -\hat{\nu}_k\|^3  + \alpha_k\langle \nabla f (\hat{x}_{k+1}) , x - \hat{\nu}_k\rangle   \},
\end{align*}
where the second to last inequality follows form a linear lower bound of the function $f$ at the point $f(\hat{x}_k)$, and the last one follows form eliminating common terms. Rearranging some terms, and defining $z_k = (1-\alpha_k)\hat{x}_k  +\alpha_k \hat{\nu}_k$, we obtain:
\begin{align*}
&\min_{x\in Q} \phi_{k+1}(x) \geq f(\hat{x}_{k+1}) +   \langle \nabla f(\hat{x}_{k+1}) , z_k -\hat{x}_{k+1}\rangle  -(1-\alpha_k)\big( \varepsilon +\bar{\delta}^\phi_k \big)  \\
& \qquad \qquad+ \min_{x\in Q} \{ \lambda_{k+1} \frac{M_2}{6}\|x -\hat{\nu}_k\|^3  + \alpha_k\langle \nabla f (\hat{x}_{k+1}) , x - \hat{\nu}_k\rangle   \}.
\end{align*}

Next, by Lemma~$4.3$ in~\cite{baes2009estimate}, we have
\begin{align}\label{eq:aux5}
&\min_{x\in Q} \phi_{k+1}(x) \geq f(\hat{x}_{k+1}) +   \langle \nabla f(\hat{x}_{k+1}) , z_k -\hat{x}_{k+1}\rangle  -(1-\alpha_k)\big( \varepsilon +\bar{\delta}^\phi_k \big)  \nonumber \\
& \qquad \qquad+ \min_{x\in Q} \{ \frac{\lambda_{k+1}}{\alpha^3_k} \frac{M_2}{6}\|x -z_k\|^3  + \langle \nabla f (\hat{x}_{k+1}) , x - z_k\rangle   \},  \nonumber\\
& \geq f(\hat{x}_{k+1})  -(1-\alpha_k)\big( \varepsilon +\bar{\delta}^\phi_k \big) + \min_{x\in Q} \{ \frac{\lambda_{k+1}}{\alpha^3_k} \frac{M_2}{6}\|x -z_k\|^3  + \langle \nabla f (\hat{x}_{k+1}) , x - \hat{x}_{k+1}\rangle   \},
\end{align}
where the last inequality follows by adding and subtracting $\hat{x}_{k+1}$.

The next step is to bound the last term in the above relation, i.e., $\langle \nabla f (\hat{x}_{k+1}) , x - \hat{x}_{k+1}\rangle$, from the fact that $\hat{x}_{k+1}$ is an approximate solution to the auxiliary sub problem
\begin{align*}
\argmin_{x \in Q} \left\lbrace  \varphi(x) \triangleq \langle \nabla f(z), x-z \rangle + \frac{1}{2} \langle \nabla^2 f(z)(x-z),x-z \rangle + \frac{N}{6}\|x-z\|^3 \right\rbrace.
\end{align*}
However, note that the optimality condition  $\langle \nabla \varphi(x_{k+1}), y-x_{k+1} \rangle  \geq 0$ for all $y \in Q$ holds for an optimal point $x_{k+1}$, but we only have access to inexact solvers which return a point $\hat{x}_{k+1} \in Q$ such that $\varphi(\hat{x}_{k+1}) - \varphi({x}_{k+1}) \leq  \delta^{f}_k $. So we will see what is the effect of this error.

We will use Proposition~\ref{prop:primal_dual} to relate error in the function value to error in the gradient.
\newline
Assume we are able to optain a point $\tilde{x}$ such that
\begin{align*}
\varphi(\tilde{x}) - \min_{x\in Q}\varphi(x) \leq \delta^{f}_k .
\end{align*}
$\delta^{f}_k $ is the accuracy by which we solve the subproblem of the cubic regularization method.
\newline
\begin{align}\label{eq:aux_5}
\langle  \varphi'(\tilde{x}),x - \tilde{x} \rangle & = \langle  \varphi'(x^*),x - \tilde{x} \rangle - \langle  \varphi'(x^*) - \varphi'(\tilde{x} ),x - \tilde{x} \rangle \nonumber \\
& \geq \langle  \varphi'(x^*),x - \tilde{x} \rangle - \| \varphi'(x^*) - \varphi'(\tilde{x} ) \| \|x - \tilde{x} \| \qquad \text{By Cauchy-Schwarz} \nonumber\\
& \geq \underbrace{\langle  \varphi'(x^*),x - x^* \rangle}_{\geq 0 \ \ \text{by optimality}} + \langle  \varphi'(x^*),x^* -\tilde{x} \rangle- \| \varphi'(x^*) - \varphi'(\tilde{x} ) \| \|x - \tilde{x} \| \qquad \text{Add and subtract $x^*$} \nonumber\\
& \geq \underbrace{\varphi(x^*) - \varphi(\tilde{x})}_{-\delta^{f}_k } + \underbrace{\frac{\sigma_0}{p}\|x^* - \tilde{x}\|^p}_{\geq 0 } - \underbrace{\| \varphi'(x^*) - \varphi'(\tilde{x} ) \|}_{\ \leq  (L_0 + M_1)\|x^* - \tilde{x}\|} \underbrace{\|x - \tilde{x} \|}_{\leq D_Q} \qquad \text{Using~\eqref{eq:prop_subproblem}}\nonumber \\  
& \geq -\delta^{f}_k  - (L_0 + M_1)\|x^* - \tilde{x}\|D_Q 
\end{align}
Also from~\eqref{eq:prop_subproblem}, we have:
\begin{align*}
\delta^{f}_k  \geq  \varphi(\tilde{x}) - \varphi(x^*) \geq \frac{\sigma_0}{p}\|\tilde{x} - x^*\|^p \qquad \text{then} \qquad  \|\tilde{x} - x^*\| \leq \left(\frac{\delta^{f}_k  \cdot p}{\sigma_0} \right)^{1/p}
\end{align*}

Therefore, from~\eqref{eq:aux_5}, we obtain:
\begin{align}\label{eq:bound_gradient}
\langle  \varphi'(\tilde{x}),x - \tilde{x} \rangle &  \geq -\delta^{f}_k  - (L_0 + M_1)\|x^* - \tilde{x}\|D_Q \nonumber \\
& \geq -\delta^{f}_k  - (L_0 + M_1)D_Q\left(\frac{\delta^{f}_k  \cdot p}{\sigma_0} \right)^{1/p} \nonumber\\
& \geq -(\delta^{f}_k )^{1/p} - (L_0 + M_1)D_Q\left(\frac{\delta^{f}_k  \cdot p}{\sigma_0} \right)^{1/p} \qquad \text{assuming $\delta^{f}_k <1$} \nonumber\\
& = -(\delta^{f}_k )^{1/p}\left(1+ (L_0+M_1)\left(\frac{p}{\sigma_0} \right)^{1/p} D_Q \right)\nonumber\\
& = -\tilde{\delta}^{f}_k 
\end{align}
The result in~\eqref{eq:bound_gradient} allows us to control the error in the gradient from the error in the function value.

Recall that we need to prove
\begin{align*}
\min_{x\in Q} \phi_{k+1}(x) \geq  f(\hat{x}_{k+1})  - \varepsilon.
\end{align*}
And so far we have
\begin{align}\label{eq:intermediate}
\min_{x\in Q} \phi_{k+1}(x) & \geq f(\hat{x}_{k+1})  -(1-\alpha_k)\big( \varepsilon +\bar{\delta}^{\phi}_k  \big) + \min_{x\in Q} \{ \frac{\lambda_{k+1}}{\alpha^3_k} \frac{M_2}{6}\|x -z_k\|^3  + \langle \nabla f (\hat{x}_{k+1}) , x - \hat{x}_{k+1}\rangle   \}.
\end{align}
So we continue this proof by bounding the last term above
\begin{align*}
\langle \nabla f (\hat{x}_{k+1}) , x - \hat{x}_{k+1}\rangle.
\end{align*}
It follows from~\eqref{eq:bound_gradient}
\begin{align}\label{eq:bound1}
-\tilde{\delta}^{f}_k & \leq \langle  \varphi'(\hat{x}_{k+1}),x - \hat{x}_{k+1} \rangle \nonumber\\
& \leq  \langle \nabla f(z)+ \nabla^2 f(z)(\hat{x}_{k+1}-z),x-\hat{x}_{k+1}\rangle + \frac{N \|z - \hat{x}_{k+1}\|}{2} \langle \hat{x}_{k+1} -z, x - \hat{x}_{k+1} \rangle    \nonumber \\
& \leq  \langle \nabla f(z)+ \nabla^2 f(z)(\hat{x}_{k+1}-z),x-\hat{x}_{k+1}\rangle + \frac{N \|z - \hat{x}_{k+1}\|}{2} \big( \|z - \hat{x}_{k+1}\| \|x-z\| - \|z - \hat{x}_{k+1}\|^2\big)  
\end{align}
On the other hand, from Hessian Lipschitz continuity, it follows that
\begin{align*} 
&\langle \nabla f(z) + \nabla^2f(z)(\hat{x}_{k+1} - z), x - \hat{x}_{k+1}\rangle \nonumber \\
&\leq \| \nabla f(z) + \nabla^2f(z)(\hat{x}_{k+1} - z) - \nabla f(\hat{x}_{k+1})  \| \|x - \hat{x}_{k+1} \| + \langle \nabla f(\hat{x}_{k+1}), x -\hat{x}_{k+1}\rangle \nonumber \\
& \leq \frac{M_2}{2}\|z - \hat{x}_{k+1}\|^2\|x - \hat{x}_{k+1}\|+ \langle \nabla f(\hat{x}_{k+1}), x -\hat{x}_{k+1}\rangle \nonumber \\
& \leq \frac{M_2}{2}\|z - \hat{x}_{k+1}\|^2(\|x - z\| + \|z - \hat{x}_{k+1}\|)+ \langle \nabla f(\hat{x}_{k+1}), x -\hat{x}_{k+1}\rangle  ,
\end{align*}
which implies
\begin{align}\label{eq:bound2}
&\frac{M_2}{2}\|z - \hat{x}_{k+1}\|^2(\|x - z\| + \|z - \hat{x}_{k+1}\|)+ \langle \nabla f(\hat{x}_{k+1}), x -\hat{x}_{k+1}\rangle  - \\
& \qquad  \langle \nabla f(z) + \nabla^2f(z)(\hat{x}_{k+1} - z), x - \hat{x}_{k+1}\rangle \geq 0.
\end{align}

Adding \eqref{eq:bound1} and \eqref{eq:bound2} we obtain:
\begin{align}\label{eq:intermediate2}
\frac{N+M_2}{2}\|z - \hat{x}_{k+1}\|\|x - z\|+\frac{M-N}{2}\|z - \hat{x}_{k+1}\|^3 + \langle \nabla f(\hat{x}_{k+1}), x -\hat{x}_{k+1}\rangle \geq - \tilde{\delta}^{f}_k.
\end{align}

Lets recall~\eqref{eq:intermediate}, and replacing in~\eqref{eq:intermediate2} we have:
\begin{align*}
\min_{x\in Q} \phi_{k+1}(x) & \geq f(\hat{x}_{k+1})  -(1-\alpha_k)\big( \varepsilon +\bar\delta^\phi_k \big) + \min_{x\in Q} \{ \frac{\lambda_{k+1}}{\alpha^3_k} \frac{M_2}{6}\|x -z_k\|^3  + \langle \nabla f (\hat{x}_{k+1}) , x - \hat{x}_{k+1}\rangle   \}.
\end{align*}
Now, lets focus on the second term above,
\begin{align*}
& \min_{x\in Q} \{ \frac{\lambda_{k+1}}{\alpha^3_k} \frac{M_2}{6}\|x -z_k\|^3  + \langle \nabla f (\hat{x}_{k+1}) , x - \hat{x}_{k+1}\rangle   \}
\\ 
&\geq 
\min_{x\in Q} \{ \frac{\lambda_{k+1}}{\alpha^3_k} \frac{M_2}{6}\|x -z_k\|^3  -\tilde{\delta}^{f}_k - \frac{N+M_2}{2}\|z - \hat{x}_{k+1}\|\|x - z\|+\frac{N-M}{2}\|z - \hat{x}_{k+1}\|^3  \}
\\ 
&\geq 
-\tilde{\delta}^{f}_k + \underbrace{\min_{x\in Q} \{ \frac{\lambda_{k+1}}{\alpha^3_k} \frac{M_2}{6}\|x -z_k\|^3   - \frac{N+M_2}{2}\|z - \hat{x}_{k+1}\|\|x - z\|+\frac{N-M}{2}\|z - \hat{x}_{k+1}\|^3  \}}_{\geq 0 \ \  \text{when}\ \  N=5M}
\end{align*}
Moreover,the choice of $N=5M$ guarantees that
\begin{align}\label{eq:bound_beta}
\frac{1}{12} \geq \frac{\alpha_k^3}{\lambda_{k+1}}.
\end{align}

As a final step, we need to control the errors
\begin{align*}
-(1-\alpha_k)\big( \varepsilon +\bar\delta^\phi_k \big) - \tilde{\delta}^{f}_k \leq \varepsilon.
\end{align*}
Recall that
\begin{align*}
\tilde{\delta}^{f}_k & = \delta_f^{1/3}\left(1+ (L_0+M_1)D_Q\left(\frac{3}{\sigma_0} \right)^{1/3}  \right) \\
\bar\delta^\phi_k & = \delta_\phi^{1/3}\left(1+ L_0\lambda_kD_Q \left(\frac{3}{\sigma_0 \lambda_k}\right)^{1/3} \right)
\end{align*}
\newline
Therefore it is enough to select $\delta_f$ and $\varepsilon_\phi$ as follows
\begin{align*}
0\leq \delta^\phi_k \leq \min \left\lbrace  1, \left(\frac{\alpha_k \gamma}{1-\alpha_k}\right)^3\left(\frac{\varepsilon}{1+D_QL_0\big(3\lambda_k^{2} / \sigma_0 \big)^{1/3}}\right)^3\right\rbrace \\
0\leq \delta^f_k \leq \min \left\lbrace  1,\left((1-\gamma)\alpha_k +\frac{1}{2}\right)^{3} \left(\frac{\varepsilon}{1+D_QL_0\big(3 / \sigma_0 \big)^{1/3}}\right)^3\right\rbrace.
\end{align*}
With the two above choices of ac accuracy for the subproblem, we obtain the final desire result 
\begin{align*}
\min_{x\in Q} \phi_{k+1}(x) \geq  f(\hat{x}_{k+1})  - \varepsilon.
\end{align*}

\end{proof}

With Lemma~\ref{lemma:main} at hand, we are finally ready to state and prove our main result. But first, lets recall an auxiliary results form~\cite{baes2009estimate} that will allow us to bound the rate of convergence.

\begin{lemma}[Lemma 8.1 in~\cite{baes2009estimate}]\label{lemma:baes81}
    Consider a sequence $\{\alpha_k: k \geq 0\}$ in $(0,1)$, and define $\lambda_0 = 1$, $\lambda_{k+1} = (1-\alpha_k)\lambda_k$ for every $k\geq  0$. If there exists a constant $\beta >0$ and an integer $p >0 $ for which $\alpha_k^p/\lambda_{k+1} \geq \beta$ for every $k\geq  0$, then, for all $K \geq 1$, $
    \lambda_K \leq \big({p}/\big({p+N\big( \beta \big)^{1/p}\big)} \big)^p$.
\end{lemma}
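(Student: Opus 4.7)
The plan is to reduce the nonlinear recurrence to a linear one by a change of variables, then telescope. Set $u_k := \lambda_k^{-1/p}$, so that $u_0 = 1$ and the desired bound $\lambda_K \leq \bigl(p/(p+K\beta^{1/p})\bigr)^p$ is equivalent to $u_K \geq 1 + K\beta^{1/p}/p$ (I read the exponent $N$ in the statement as a typo for $K$). So it suffices to prove the per-step increment bound $u_{k+1} - u_k \geq \beta^{1/p}/p$ for every $k \geq 0$ and then sum over $k = 0,\dots,K-1$.

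The main computation is to lower bound the increment. Using $\lambda_{k+1} = (1-\alpha_k)\lambda_k$,
\begin{equation*}
u_{k+1} - u_k \;=\; \lambda_{k+1}^{-1/p}\bigl(1 - (\lambda_{k+1}/\lambda_k)^{1/p}\bigr) \;=\; \lambda_{k+1}^{-1/p}\bigl(1 - (1-\alpha_k)^{1/p}\bigr).
\end{equation*}
Since the map $t \mapsto t^{1/p}$ is concave on $[0,1]$ with derivative $1/p$ at $t=1$, the tangent-line inequality gives $(1-\alpha_k)^{1/p} \leq 1 - \alpha_k/p$ for $\alpha_k \in (0,1)$, hence $1 - (1-\alpha_k)^{1/p} \geq \alpha_k/p$. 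Plugging in the hypothesis $\alpha_k^p \geq \beta \lambda_{k+1}$, i.e.\ $\alpha_k \geq \beta^{1/p}\lambda_{k+1}^{1/p}$, yields
\begin{equation*}
u_{k+1} - u_k \;\geq\; \lambda_{k+1}^{-1/p}\,\frac{\alpha_k}{p} \;\geq\; \frac{\beta^{1/p}}{p}.
\end{equation*}

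Telescoping this inequality from $k=0$ to $k = K-1$ produces $u_K \geq u_0 + K\beta^{1/p}/p = 1 + K\beta^{1/p}/p$, and inverting the change of variables gives the claimed bound on $\lambda_K$. The only mildly delicate step is verifying the concavity inequality $(1-\alpha_k)^{1/p} \leq 1 - \alpha_k/p$, but this is a one-line tangent-line bound that holds for all $p \geq 1$ and $\alpha_k \in [0,1]$; everything else is algebraic manipulation of the recurrence. No inductive argument on $k$ is needed beyond the telescoping sum.
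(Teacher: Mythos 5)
Your proof is correct, and it is essentially the standard argument for this lemma (the paper itself does not reprove it but imports it from Baes, where the proof is the same telescoping of $\lambda_k^{-1/p}$ combined with the concavity bound $(1-\alpha_k)^{1/p}\leq 1-\alpha_k/p$). Your reading of $N$ as a typo for $K$ is also right, as confirmed by how the lemma is later applied with the bound $\bigl(3/(3+k(1/12)^{1/3})\bigr)^3$.
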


Next, we state our main auxiliary result about the complexity of Algorithm~\ref{alg:baes_modified}, which we use for the proof of convergence rate of Algorithm~\ref{alg:main}.

\begin{theorem}\label{thm:main_result_aux}
    Let Assumptions~\ref{assum:smooth1}, \ref{assum:smooth2} and \ref{assum:bounded} hold. Let $\varepsilon >0$ and $\gamma \in (0,1)$. Moreover, set $\delta_\phi$ and $\delta_f$ according to Lemma~\ref{lemma:main} and,
    \begin{align*}
    K \geq  \left \lceil{ 12 \left(\frac{1}{\varepsilon} \right)^{1/3}\left( f(x_0) -f^* + \frac{M}{6}\|x_0-x^*\|^3 \right)^{1/3} }\right \rceil.
    \end{align*}
    Then, the output of Algorithm~\ref{alg:baes_modified} has the following property: $f(\hat x_K) - f^* \leq \varepsilon$.
\end{theorem}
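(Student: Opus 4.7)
The plan is to combine the inductive step from Lemma~\ref{lemma:main} with the generic estimate-sequence bound in Proposition~\ref{prop:estimate_sequence}, and then to control the decay of $\lambda_K$ using the relation $12\alpha_k^3 = (1-\alpha_k)\lambda_k = \lambda_{k+1}$ enforced by Lines 5--6 of Algorithm~\ref{alg:baes_modified}, together with Lemma~\ref{lemma:baes81}. The three ingredients are already available; the task is to glue them together and verify that the choice of $K$ given in the statement makes each of the resulting terms smaller than $\varepsilon$ (up to an overall constant absorbed into the tolerances).

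First, I would establish by induction on $k$ the invariant
\[
\min_{x\in Q}\phi_k(x) \;\geq\; f(\hat{x}_k) - \varepsilon.
\]
The base case $k=0$ is immediate from the definition $\phi_0(x)=f(\hat{x}_0)+\tfrac{M_2}{6}\|x-\hat{x}_0\|^3$, which achieves its minimum at $\hat{x}_0$ with value $f(\hat{x}_0)\geq f(\hat{x}_0)-\varepsilon$. The inductive step is exactly Lemma~\ref{lemma:main}, which requires that the accuracies $\delta_k^\phi$ and $\delta_k^f$ be chosen as in the hypothesis of that lemma; these are precisely the bounds listed in the theorem's hypothesis. Thus the sequence $(\phi_k,\lambda_k)$ generated by Algorithm~\ref{alg:baes_modified} is an (inexact) estimate sequence in the sense of Definition~\ref{def:estimate}.

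Next I would invoke Proposition~\ref{prop:estimate_sequence} (in its inexact form, with the $\varepsilon$ slack), obtaining
\[
f(\hat{x}_K) - f^* \;\leq\; \lambda_K\bigl(\phi_0(x^*)-f^*\bigr) + \varepsilon
\;=\; \lambda_K\Bigl(f(x_0)-f^* + \tfrac{M_2}{6}\|x_0-x^*\|^3\Bigr) + \varepsilon,
\]
using the explicit form of $\phi_0$. It remains to show that, for the prescribed $K$, the first term on the right is at most an absolute constant times $\varepsilon$ (after which one absorbs this into the $\varepsilon$ budget by a harmless halving at the start of the argument). From Lines~5--6 of the algorithm, $\alpha_k^3/\lambda_{k+1} = 1/12$ for all $k$, so the hypothesis of Lemma~\ref{lemma:baes81} is satisfied with $p=3$ and $\beta = 1/12$. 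The lemma then yields
\[
\lambda_K \;\leq\; \left(\frac{3}{3+K\,(1/12)^{1/3}}\right)^{3} \;\leq\; \frac{C}{K^{3}}
\]
for a small absolute constant $C$. Substituting $K \geq \lceil 12\varepsilon^{-1/3}(f(x_0)-f^*+\tfrac{M_2}{6}\|x_0-x^*\|^3)^{1/3}\rceil$ into this bound makes $\lambda_K(\phi_0(x^*)-f^*)$ of order $\varepsilon$, and combining this with the $+\varepsilon$ coming from the estimate sequence gives $f(\hat{x}_K)-f^* \leq O(\varepsilon)$, yielding the claim after rescaling $\varepsilon$ by a constant at the outset of the proof.

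The main subtlety to watch is the interplay between the per-iteration tolerances $\delta_k^\phi,\delta_k^f$ and the accumulated error in the estimate-sequence invariant: one must verify that the $k$-dependent shrinking factors $\alpha_k/(1-\alpha_k)$ and $(1-\gamma)\alpha_k + 1/2$ appearing in Lemma~\ref{lemma:main} exactly cancel the $(1-\alpha_k)$ deflation of the previous step's $\varepsilon$ in the chain $\min\phi_{k+1} \geq (1-\alpha_k)(\min\phi_k) + \alpha_k(\cdots)$, so that the single $\varepsilon$ slack is preserved rather than compounding. This is really what Lemma~\ref{lemma:main} is doing, and once that is in hand, the remainder of the argument is an application of Lemma~\ref{lemma:baes81} and a direct substitution.
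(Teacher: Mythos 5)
Your proposal is correct and follows essentially the same route as the paper's own proof: establish the invariant $\min_{x\in Q}\phi_k(x)\geq f(\hat{x}_k)-\varepsilon$ by induction via Lemma~\ref{lemma:main}, apply Proposition~\ref{prop:estimate_sequence}, and bound $\lambda_K$ via Lemma~\ref{lemma:baes81} with $\alpha_k^3/\lambda_{k+1}=1/12$. The only additions are that you make explicit the base case and the constant-factor rescaling of $\varepsilon$ needed at the end, both of which the paper leaves implicit.
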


Theorem~\ref{thm:main_result_aux} states that if one is allowed to solve the subproblems of Algorithm~\ref{alg:baes_modified} with the prescribed accuracy, then the oracle complexity of Algorithm~\ref{alg:baes_modified} is $O(1/\varepsilon^{1/3})$ to reach an solution that is $\varepsilon$ away to the optimal function value.
\begin{proof}
    Initially, from Lemma~\ref{lemma:main} we have that 
        \begin{align*}
    \min_{x\in Q} \phi_k(x) & \geq f(\hat{x}_k) - \varepsilon  \qquad \forall k \geq 1.
    \end{align*}
    Then, from Proposition~\ref{prop:estimate_sequence}, it follows that
    \begin{align*}
    f(\hat x_k) {-} f^* \leq \lambda_k(\phi_0(x^*) {-}f^*) + \varepsilon.
    \end{align*}
    which in combination with Lemma~\ref{lemma:baes81} and~\eqref{eq:bound_beta} provides:
    \begin{align*}
    f(\hat x_k) {-} f^* \leq \left(\frac{3}{3+k\big( 1/12\big)^{1/3}} \right)^3(\phi_0(x^*) {-}f^*) + \varepsilon.
    \end{align*}
    and the desired result follows.
\end{proof}

Now that we have Theorem~\ref{thm:main_result_aux} at hand, we can prove our main result.

\begin{proof}(Theorem~\ref{thm:main_result})
The proof follows the same arguments as proof of Theorem~\ref{thm:main_result_aux}.

Recall that in Algorithm~\ref{alg:main} we define:
\begin{align*}
    \phi_{k{+}1}(\mathbf{x}) {=} (1{-}\alpha_k)\phi_k(\mathbf{x}) + \alpha_k \big( F(\mathbf{x}_{k{+}1}) + \langle \nabla F(\mathbf{x}_{k{+}1}),\mathbf{x}{-} \mathbf{x}_{k{+}1} \rangle \big)
\end{align*}
    Then, from Lemma~\ref{lemma:main} it follows that
        \begin{align*}
    \min_{\mathbf{x} \in Q^m \bigcap  \mathcal{Q}_{\tilde\varepsilon}} \phi_k(\mathbf{x}) & \geq F(\hat{\mathbf{x}}_k) - \varepsilon  \qquad \forall k \geq 1.
    \end{align*}
    Then, from Proposition~\ref{prop:estimate_sequence}, it follows that
    \begin{align*}
    F(\hat{\mathbf{x}}_k) {-} F^* \leq \lambda_k(\phi_0(\mathbf{x}^*) {-}F^*) + \varepsilon.
    \end{align*}
    which in combination with Lemma~\ref{lemma:baes81} and~\eqref{eq:bound_beta} provides:
    \begin{align*}
    F(\hat{\mathbf{x}}_k) {-} F^* \leq \left(\frac{3}{3+k\big( 1/12\big)^{1/3}} \right)^3(\phi_0(\mathbf{x}^*) {-}F^*) + \varepsilon.
    \end{align*}
    and the desired result follows by finding $k\geq 1$ such that the first term above is less than $\varepsilon$.
\end{proof}

\section{Proof of Theorem~\ref{thm:complexity_inner}: Communication Complexity of the Decentralized Cubic Regularized Newton Method}\label{sec:proof_comm}

In this section, we study the communication complexity of Algorithm~\ref{alg:main}. Recall that at each iteration, we are required to solve two auxiliary subproblems (Lines 10 and 13) in a distributed manner via Algorithm~\ref{alg:approx_subproblem}.

Note that we have defined our consensus constraints as $Ax=0$, with $A$ being the graph Laplacian obtained from the network. 

Lets consider the sets $\mathcal{A} {=} \{x \mid Ax{=}0\}$, and  $\mathcal{B} {=} \{x \mid \|Ax\|{=}0\}$. Then $\mathcal{A} {=} \mathcal{B}$. Thus, we compare the following two problems
\begin{align}
\min_{ \|Ax\| \leq 0} f(x) \label{problems1}\\
\min_{ \|Ax\| \leq \varepsilon} f(x) \label{problems2} 
\end{align}

\begin{proposition}\label{prop:constrains}
    Denote as $f_0^*$ as the optimal value of the optimization problem \eqref{problems1}, and $f_\varepsilon^*$ as the optimal value of the optimization problem \eqref{problems2}. Moreover assume $f_0^*$ and $f_\varepsilon^*$ are finite, and define as $y_0^*$ and  $y_\varepsilon^*$ the corresponding dual optimal solutions and that there is no duality gap. Then
    \begin{align}\label{eq:bounds_problems}
    y^*_\varepsilon \varepsilon \leq f^*_0 {-} f^*_\varepsilon \leq y^*_0\varepsilon.
    \end{align}
\end{proposition}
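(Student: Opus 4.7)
The plan is to treat this as a standard sensitivity/perturbation argument for convex programs. First I would write out the Lagrangian duals of the two problems. For the $\varepsilon$-perturbed problem, the Lagrangian is
\[
L_\varepsilon(x,y) \;=\; f(x) + y\bigl(\|Ax\| - \varepsilon\bigr) \;=\; L_0(x,y) - y\varepsilon,
\]
so the dual function is $g_\varepsilon(y) = g_0(y) - y\varepsilon$, where $g_0(y) = \min_x L_0(x,y)$. By the no-duality-gap hypothesis, $f_0^* = \max_{y\geq 0} g_0(y) = g_0(y_0^*)$ and $f_\varepsilon^* = \max_{y\geq 0} g_\varepsilon(y) = g_0(y_\varepsilon^*) - y_\varepsilon^* \varepsilon$.

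Then the two bounds fall out by swapping the dual multipliers between the two problems. For the upper bound, use $y_0^*$ as a feasible (suboptimal) dual point in the $\varepsilon$-problem:
\[
f_\varepsilon^* \;\geq\; g_\varepsilon(y_0^*) \;=\; g_0(y_0^*) - y_0^*\varepsilon \;=\; f_0^* - y_0^*\varepsilon,
\]
which rearranges to $f_0^* - f_\varepsilon^* \leq y_0^*\varepsilon$. For the lower bound, use $y_\varepsilon^*$ as a feasible dual point in the $0$-problem:
\[
f_0^* \;\geq\; g_0(y_\varepsilon^*) \;=\; g_\varepsilon(y_\varepsilon^*) + y_\varepsilon^*\varepsilon \;=\; f_\varepsilon^* + y_\varepsilon^*\varepsilon,
\]
giving $f_0^* - f_\varepsilon^* \geq y_\varepsilon^*\varepsilon$.

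There is no real obstacle here beyond verifying that the dual representations we use are legitimate, which is guaranteed by the stated assumption of zero duality gap (so strong duality holds for both the $0$- and the $\varepsilon$-perturbed problem, and both primal/dual optima are attained and finite). The key conceptual point is simply that perturbing the right-hand side of the constraint from $0$ to $\varepsilon$ shifts the Lagrangian by the linear term $-y\varepsilon$, after which the inequalities follow from the optimality of $y_0^*$ and $y_\varepsilon^*$ in their respective dual problems. This is the classical fact that dual optimal multipliers are sub/supergradients of the value function in the constraint perturbation; the statement of the proposition is just the two-sided subgradient inequality evaluated at the perturbation $\varepsilon$.
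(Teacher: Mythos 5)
Your proof is correct and follows essentially the same route as the paper: both arguments invoke strong duality for the two problems, observe that the perturbed dual function is $g_0(y)-y\varepsilon$, and obtain each inequality by evaluating one problem's dual function at the other problem's optimal multiplier. The paper proves the lower bound explicitly this way and states that the upper bound "follows similarly," which is exactly the symmetric swap you carry out.
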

\begin{proof}
The proof of~\eqref{eq:bounds_problems} follows from the fact that:
\begin{align*}
f^*_0 & {=} \inf_x \{ f(x) {+} y^*_0 \|Ax\| \}\\
f^*_\varepsilon & {=} \inf_x \{ f(x) {+} y^*_\varepsilon (\|Ax\| {-}  \varepsilon ) \}
\end{align*}
moreover, denote $
q^*_0 (y )  {=}  \inf_x \{ f(x) {+} y\|Ax\| \}.
$
Then
\begin{align*}
f^*_0 {{-}} f^*_\varepsilon & {{=}} \inf_x \{ f(x) {{+}} y^*_0 \|Ax\| \} {{-}} \inf_x \{ f(x) {{+}} y^*_\varepsilon (\|Ax\| {{-}} \varepsilon )  \} \\
& {{=}} \inf_x \{ f(x) {{+}} y^*_0 \|Ax\| \} {{-}}\inf_x \{ f(x) {{+}} y^*_\varepsilon \|Ax \|  \}  {{+}} y^*_\varepsilon \varepsilon\\
& {=} q^*_0 (y^*_0 ) {-} q^*_0 (y^*_\varepsilon )   {+} y^*_\varepsilon \varepsilon \\
& \geq y^*_\varepsilon \varepsilon,
\end{align*}
because by definition $y^*_0 $ maximizes $q^*_0$. The other direction follows similarly. We can conclude that if we have a point $\hat{x}$ such that for $\delta>0$ it holds that
\begin{align*}
f(\hat{x}) {-} f^*_0 \leq \delta.
\end{align*}
Then, it follows from~\eqref{eq:bounds_problems} that
\begin{align}\label{eq:cost_of_approx}
f(\hat{x}) {-} f^*_\varepsilon {+}  f^*_\varepsilon{-} f^*_0 \leq \delta \nonumber\\
f(\hat{x}) {-} f^*_\varepsilon \leq \delta {+} f^*_0 {-} f^*_\varepsilon \leq \delta {+}  y_0^*\varepsilon,
\end{align}
where recall that $\varepsilon$ is the upper bound in the approximate consensus constraint, and $\delta$ is the optimality gap by which the exact consensus problem has been solved.
\end{proof}

Proposition~\ref{prop:constrains} shows that if we obtain a point $\hat{x}$ that is $\delta$ away from optimality in terms of function value with respect to the constraint $Ax=0$. Then, it will at most  $2\delta$ away from optimality in terms of function value with respect to the constraint $\|Ax\|\leq \delta /R$. This result will be important in analyzing the communication complexity of the proposed algorithm as we analyze our algorithm with respect to the set $\mathcal{Q}_{\tilde{\varepsilon}} {=} \{ \mathbf{x} \in \mathbb{R}^{nm} \mid \|\sqrt{\mathbf{W}} \hat{\mathbf{x}}\|_2 \leq \tilde{\varepsilon}\}$.

\begin{proof}(Theorem~\ref{thm:complexity_inner})
The main idea of this proof is to exploit the fact that the subproblem of minimizing the cubic regularized approximation in~\eqref{eq:aux_subproblem} is dual{-}friendly, as shown in Section~\ref{sec:cubics}. Algorithm~\ref{alg:approx_subproblem} is an adaptation to subproblem~\eqref{eq:generic_subproblem} in~\cite[Algorithm $5$]{Uribe2018a}, whose communication complexity is explicitly available in~\cite[Theorem $5.3$]{Uribe2018a}. However, there are some technical aspects we have to take care of first.
    
    Initially, \cite[Theorem $5.3$]{Uribe2018a} guarantees at the end of required number of iterations $T$ we obtain an approximate solution $\mathbf{h}^*(\tilde{\mathbf{w}}_T)$ such that
    \begin{align*}
    \boldsymbol{\Phi}(\mathbf{h}^*(\tilde{\mathbf{w}}_T)) {-}\boldsymbol{\Phi}_0^* \leq \delta \quad \text{and} \quad \|A\mathbf{h}^*(\tilde{\mathbf{w}}_T)\| \leq \delta/R.
    \end{align*} 
    But it is important to note that $\boldsymbol{\Phi}^*_0$ is the optimal value for the function~$\eqref{eq:generic_subproblem}$ with the linear constraint $ A\mathbf{h} {=} \boldsymbol{0}_{nm}$. Whereas we need an approximate solution with respect to $ \|A\mathbf{h}\| \leq \tilde{\varepsilon}$. It follows from Proposition~\ref{prop:constrains} that the point $\mathbf{h}^*(\tilde{\mathbf{w}}_T)$ has the following property:
    \begin{align*}
    \boldsymbol{\Phi}(\mathbf{h}^*(\tilde{\mathbf{w}}_T)) {{-}}\boldsymbol{\Phi}_{\tilde \varepsilon}^* \leq \delta {{+}} \tilde{\varepsilon}\|\mathbf{y}^*_0\| \ \text{and} \ \|A\mathbf{h}^*(\tilde{\mathbf{w}}_T)\| \leq \delta/R,
    \end{align*}
    where $\mathbf{y}^*_0$ is the optimal value of the dual function~\eqref{eq:dual_regu}. The desired result follows by setting $\tilde{\varepsilon} = \delta/R$.
\end{proof}

\section{Experimental Results}\label{sec:numerics}

In this section, we present numerical experiments for the implementation of Algorithm~\ref{alg:main} applied to the logistic regression problem: \begin{align}\label{eq:logistic loss}
        \min_{x \in \mathbb{R}^n}\frac{1}{d}\sum\limits_{i = 1}^d \ln\Bigl(1 + \exp\bigl(-y_i\langle w_i, x \rangle\bigr)\Bigr). 
\end{align} 
We given a set of $d$ data pairs $\{y_i,w_i\}$ for $1\leq i\leq d$, where $y_i \in \{1, -1\}$ is the class label of object $i$, and $w_i \in \mathbb{R}^n$ is the set of features of object $i$. Moreover, we assume the data points are uniformly split among $m$ nodes, connected over a network, such that each node has $d^i$ data points, and $d= m\cdot d^i$.



\begin{figure}[th!]
    \centering
    \includegraphics[trim=35 0 40 0, clip,width=1\linewidth]{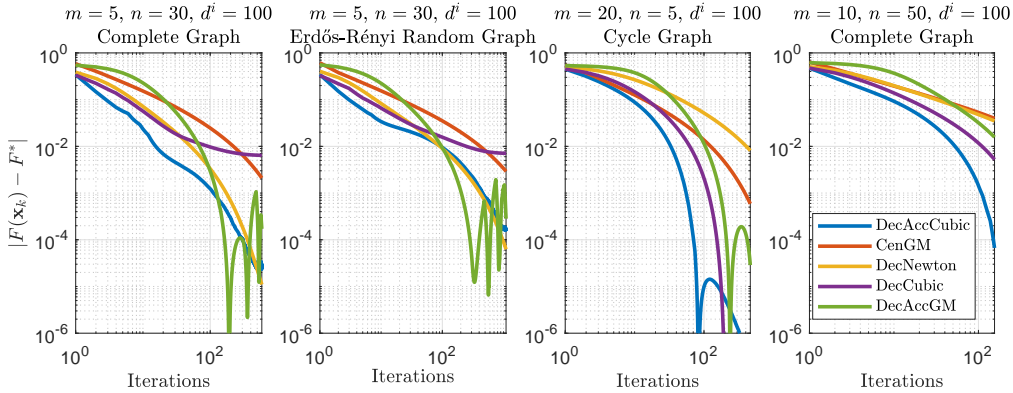}
    \vspace{-0.6cm}
    \caption{\textbf{Oracle Complexity}: Optimality gap of the iterations generated by Algorithm~\ref{alg:main} for different networks topologies and different problem parameters}
    \label{fig:oracle_complex}
\end{figure}

\begin{figure}[ht!]
    \centering
    \includegraphics[width=0.7\linewidth]{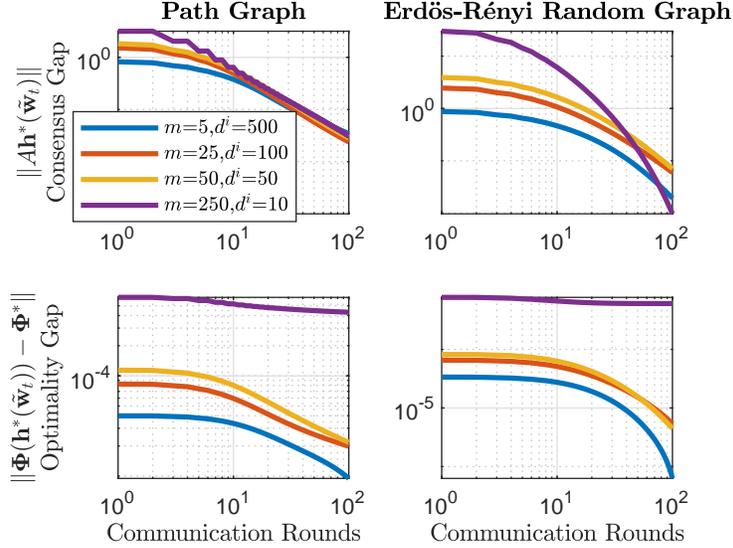}
    \caption{\textbf{Communication Complexity}: Consensus and optimality gap of the iterates generated by Algorithm~\ref{alg:approx_subproblem} for the solution of the auxiliary subproblems.}
    \label{fig:comm_complex}
\end{figure}

We compare the performance of Algorithm~\ref{alg:main} (\textbf{DecAccCubic}), centralized gradient method  (\textbf{CenGM}), distributed Newton method (\textbf{DecNewton}), distributed non-accelerated cubic method (\textbf{DecCubic}), and distributed accelerated gradient method (\textbf{DecAccGM}). Next, we describe each of these methods. 

Lets recall the two main optimization problems:
\begin{align*}
\min_{x\in Q } \left\lbrace f(x) \triangleq \sum_{i{=}1}^m f^i(x) \right\rbrace  \ \ \text{and} \ \
\min_{\substack{\mathbf{x} \in Q^m \\   \sqrt{\mathbf{W}}\mathbf{x}   {=} \boldsymbol{0}_{nm} } } \left\lbrace F(\mathbf{x}) \triangleq \sum_{i{=}1}^m f^i(x^i) \right\rbrace , 
\end{align*}

\textbf{Centralized gradient method (CenGM):} Gradient descent when all the data points are stored at the same location, i.e.,
\begin{align*}
    x_{k+1} = x_{k} - \alpha \sum_{i{=}1}^m \nabla f^i(x_k).
\end{align*}

\textbf{Distributed Newton method (DecNewton):} Netwon method with consensus contraints in the subproblem, i.e.,
\begin{align*}
      \mathbf{x}_{k+1} = \argmin_{A\mathbf{x} = 0}   \left\lbrace  F(\mathbf{x}_k) {+} \langle \nabla F(\mathbf{x_k}), \mathbf{x}{-}\mathbf{x_k} \rangle + \frac{1}{2} \langle \nabla^2 F(\mathbf{x_k})(\mathbf{x} {-}\mathbf{x_k}),\mathbf{x} {-}\mathbf{x_k}\rangle \right\rbrace
\end{align*}

 \textbf{Distributed non-accelerated cubic  (DecCubic):} Constrained Cubic Regularized Newton method with no acceleration
 
 \begin{align*}
      \mathbf{x}_{k+1} = \argmin_{A\mathbf{x} = 0}   \left\lbrace  F(\mathbf{x}_k) {+} \langle \nabla F(\mathbf{x_k}), \mathbf{x}{-}\mathbf{x_k} \rangle + \frac{1}{2} \langle \nabla^2 F(\mathbf{x_k})(\mathbf{x} {-}\mathbf{x_k}),\mathbf{x} {-}\mathbf{x_k}\rangle + \frac{N}{5}\|\mathbf{x}{-}\mathbf{x_k}\|^3\right\rbrace
\end{align*}

\textbf{Distributed accelerated gradient method (DecAccGM):} Accelerated Gradient Method with consensus constraints
\begin{algorithm}[H]
    \caption{Decentralized Accelerated Gradient Method}
    \label{alg:main2}
    \begin{algorithmic}[1]
        \STATE {\bfseries Input:} $x_0^i{=} \boldsymbol{0}_n$ $\upsilon_0^i{=} x_0^i$, $\lambda_0{=} 1$, $\forall i\in V$, $\phi_0(\mathbf{x}) {=} F(\mathbf{x}_0) {+} M_1\|\mathbf{x}{-}\mathbf{x}_0\|^2/2$, Number of iterations $K$.
        \STATE \textit{Each agent executes the following:}
        \FOR{$k{=}1,\cdots K-1$ }        
        \STATE Find $\alpha_k$ such that $\alpha_k^2 {=} (1{-}\alpha_k)\lambda_k$.
        \STATE $\lambda_{k{+}1} {=} (1{-}\alpha_k)\lambda_k$.
        \STATE $z^i_{k} {=} \alpha_k \upsilon_k^i {+} (1{-}\alpha_k)x_k^i$.
        \STATE \textit{Jointly solve:}
        \STATE $\mathbf{x}_{k+1} \approx_{\delta^F_k} \argmin_{A\mathbf{x} = 0}   \left\lbrace  F(\mathbf{x}_k) {+} \langle \nabla F(\mathbf{x_k}), \mathbf{x}{-}\mathbf{x_k} \rangle  + \frac{M_1}{2}\|\mathbf{x}{-}\mathbf{x_k}\|^2\right\rbrace$.
        \STATE $\phi_{k{+}1}(\mathbf{x}) {=} (1{-}\alpha_k)\phi_k(\mathbf{x}) + \alpha_k \big( F(\mathbf{x}_{k{+}1}) + \langle \nabla F(\mathbf{x}_{k{+}1}),\mathbf{x}{-} \mathbf{x}_{k{+}1} \rangle \big)$.
        \STATE \textit{Jointly solve:}
        \STATE $\boldsymbol{\upsilon}_{k{+}1}     \approx_{\delta^\phi_k} \argmin\limits_{\mathbf{x} \in Q^m \bigcap  \mathcal{Q}_{\tilde\varepsilon}} \phi_{k{+}1}(\mathbf{x})$.
        \ENDFOR
        \STATE {\bfseries Output:} End points $x_K^i, \ \forall i\in V$.
    \end{algorithmic}
\end{algorithm}
 Figure~\ref{fig:oracle_complex} shows the oracle complexity of Algorithm~\ref{alg:main} in terms of the optimality gap of the generated iterations for different network topologies (Complete, Erd\"os-R\'enyi, and Cycle graphs) and various problem parameters (number of agents, number of data points and dimensions). In all scenarios we explore, the proposed approach has the best performance with respect to the oracle complexity. Figure~\ref{fig:comm_complex} shows the communication complexity of solving the auxiliary subproblems with Algorithm~\ref{alg:approx_subproblem}. The top row shows the consensus gap, which indicates that the agreement among the agents on a solution increases as the number of communication rounds increases. The bottom row shows the agreement is on a solution to the auxiliary subproblem.

\section{Discussion and Open Problems}\label{sec:open}

\textbf{Related Work:} Cubic-regularized Newton's method in the centralized setup has been extensively studied for large problem classes of convex and non{-}convex problems, e.g., Riemannian Manifolds in~\cite{zhang2018cubic,agarwal2018adaptive}, where it was shown that the proposed algorithm reaches a second{-}order $\varepsilon${-}stationary point within $O( \varepsilon^{-3/2})$ under certain smoothness conditions, which is optimal for the function classes. A stochastic variance{-}reduced cubic regularized newton methods was proposed in~\cite{zhou2018stochastic}, where it was shown that the proposed algorithm converges to an $(\varepsilon,\sqrt{\varepsilon}$){-}approximate local minimum within $\tilde{O}(n^{4/5}\varepsilon^{-3/2})$. In~\cite{wang2018stochastic} an iteration complexity of $O(n^{2/3}\varepsilon^{-3/2})$, and randomized blocks in~\cite{doikov2018randomized}. See~\cite{Jiang2017,cartis2011adaptive,cartis2011adaptive2,cartis2020concise} for a extensive treatment of Cubic regularization. Additionally, inexactness in cubic regularization has also been explored, ~\cite{ghadimi2017second,wang2018note} explores inexact Hessian,~\cite{song2019inexact} assumes geometric convergence rates in the approximate solution of the subproblem or inexact subproblem computation in the unconstrained case~\cite{Cartis2012}. In \cite{cartis2011adaptive,cartis2011adaptive2}, the authors explored adaptive methods to handle unknown Lipschitz constants and inexact problem solution, but only a $O(k^{-2})$ convergence rate was shown. In~\cite{Grapiglia2019,Grapiglia2019a} inexact solutions of high-order unconstrained problems and H\"older continuity were also explored.

\textbf{Inexact gradient and Hessian:} We assumed that each agent could compute the Hessian of the function stored locally in its memory, i.e., $\nabla^2f^i(x)$. In the case where $f^i(x) =\sum_{l=1}^d \ell_l(x)$ where $\ell_l$ is the loss function at a point $l$, this reduces the computation of the full Hessian $\nabla^2f(x)$, as each node can compute its local Hessian in parallel. However, as studied in~\cite{ghadimi2017second,wang2018note,Jiang2017}, even in such a reduced setting, the computational of the local Hessian might be practically or computationally intractable. Thus, effective ways to incorporate such inexactness should be studied. Studying the effects of an inexact gradient, Hessian, or function evaluation in distributed cubic regularized methods remains an open problem.

\textbf{Distributed implementation beyond strong convexity:} A main technical result in obtaining a distributed algorithm was described in Section~\ref{subsec:dec}. In short, the structure of the problem allows for distributed computation of the gradient of the dual function using local information only. However, only a sublinear convergence rate was achieved in the solution of the subproblem in Theorem~\ref{thm:complexity_inner}. Note that if the regularization term was quadratic, then linear rates could be achieved. The study of the primal-dual relationship between uniform convexity and H\"older continuity requires further study \cite[Lemma 1]{nesterov2015universal}, \cite[Lemma 1]{dvurechensky2017gradient}, or \cite[Corollary 18.14]{bauschke2011convex}.

\textbf{Reaching optimality in the distributed setting:} The convergence rate obtained by Algorithm~\ref{alg:main} is not optimal for the class of functions with Lipschitz Hessian. From first-order methods, it is known that optimal bounds are proportional to centralized lower bounds times a measure of connectivity of the network~\cite{Uribe2018a}, usually $O(m)$. However, optimal rates in high-order methods strongly depend on online search procedures~\cite{monteiro2013accelerated,pmlr-v99-gasnikov19a}. Such line search methods are not currently available for distributed methods.

\textbf{Distributed high-order methods:} Recently, implementable high-order methods have been proposed~\cite{kamzolov2020near,nesterov2020superfast}, where third-order information is approximated by second and first-order generating methods with very fast convergence rates, e.g. $O(k^{-5})$. The study of decentralization and inexactness for such methods require further study.

\section{Conclusions}\label{sec:conclusions}

In this paper, we developed a  second-order Newton-type method based on cubic regularization to minimize convex, finite-sum minimization problems over networks. With the additional assumption that the objective function has a Lipschitz Hessian, the convergence rate is shown to be ${O}(k^{-3})$, which improves on first-order distributed methods ${O}(k^{-2})$. The proposed algorithm extends the inexact cubic regularized Newton method~\cite{baes2009estimate} to the distributed setup, and shows that the auxiliary subproblems can be solved cooperatively and in a distributed manner over an arbitrary network by exploiting the primal-dual structure of the cubic terms.  Compared to centralized approaches, the achieved convergence rate is slightly sub-optimal, as lower bounds for second-order methods are known to be ${O}(k^{-7/2})$. However, the proposed algorithm is suitable for applications with distributed storage and computation capabilities spread over arbitrary networks. It is an open question of whether optimal rates can be achieved in a distributed setup. Moreover, further analysis of the proposed method's communication complexity is required, as we have focused on improving the oracle complexity (computations of gradients and Hessians) while guaranteeing a distributed, nearest-neighbor based implementation.

\section*{Broader Impact}

This work does not present any foreseeable societal consequence.

\begin{ack}
This work was supported by the MIT-IBM AI grant and a Vannevar Bush Fellowship. The authors would like to thank Pavel Dvurechensky, and Alexander Gasnikov for fruitful discussions and comments.
%
%
\end{ack}

\bibliographystyle{abbrv}
\bibliography{all_refs3,example_paper,dist_cubic}

\begin{thebibliography}{10}

\bibitem{agarwal2018adaptive}
N.~Agarwal, N.~Boumal, B.~Bullins, and C.~Cartis.
\newblock Adaptive regularization with cubics on manifolds with a first-order
  analysis.
\newblock {\em arXiv preprint arXiv:1806.00065}, 2018.

\bibitem{Auslender2006}
A.~Auslender and M.~Teboulle.
\newblock Interior gradient and proximal methods for convex and conic
  optimization.
\newblock {\em SIAM Journal on Optimization}, 16(3):697--725, 2006.

\bibitem{baes2009estimate}
M.~Baes.
\newblock Estimate sequence methods: extensions and approximations.
\newblock {\em Institute for Operations Research, ETH, Z{\"u}rich,
  Switzerland}, 2009.

\bibitem{bauschke2011convex}
H.~H. Bauschke, P.~L. Combettes, et~al.
\newblock {\em Convex analysis and monotone operator theory in Hilbert spaces},
  volume 408.
\newblock Springer, 2011.

\bibitem{Beck2014}
A.~Beck and M.~Teboulle.
\newblock A fast dual proximal gradient algorithm for convex minimization and
  applications.
\newblock {\em Operations Research Letters}, 42(1):1--6, 2014.

\bibitem{Bennett592}
A.~A. Bennett.
\newblock Newton{\textquoteright}s method in general analysis.
\newblock {\em Proceedings of the National Academy of Sciences},
  2(10):592--598, 1916.

\bibitem{Bertsekas1999}
D.~P. Bertsekas.
\newblock {\em Nonlinear programming}.
\newblock Athena scientific Belmont, 1999.

\bibitem{Bertsekas2003}
D.~P. Bertsekas, A.~Nedi\'{c}, and A.~E. Ozdaglar.
\newblock {\em Convex Analysis and Optimization}.
\newblock Athena Scientific, 2003.

\bibitem{carmon2018analysis}
Y.~Carmon and J.~C. Duchi.
\newblock Analysis of krylov subspace solutions of regularized non-convex
  quadratic problems.
\newblock In {\em Advances in Neural Information Processing Systems}, pages
  10705--10715, 2018.

\bibitem{cartis2020concise}
C.~Cartis, N.~Gould, and P.~L. Toint.
\newblock A concise second-order complexity analysis for unconstrained
  optimization using high-order regularized models.
\newblock {\em Optimization Methods and Software}, 35(2):243--256, 2020.

\bibitem{cartis2011adaptive}
C.~Cartis, N.~I. Gould, and P.~L. Toint.
\newblock Adaptive cubic regularisation methods for unconstrained optimization.
  part i: motivation, convergence and numerical results.
\newblock {\em Mathematical Programming}, 127(2):245--295, 2011.

\bibitem{cartis2011adaptive2}
C.~Cartis, N.~I. Gould, and P.~L. Toint.
\newblock Adaptive cubic regularisation methods for unconstrained optimization.
  part ii: worst-case function-and derivative-evaluation complexity.
\newblock {\em Mathematical programming}, 130(2):295--319, 2011.

\bibitem{Cartis2012}
C.~Cartis, N.~I. Gould, and P.~L. Toint.
\newblock Evaluation complexity of adaptive cubic regularization methods for
  convex unconstrained optimization.
\newblock {\em Optimization Methods and Software}, 27(2):197--219, 2012.

\bibitem{conn2000trust}
A.~R. Conn, N.~I. Gould, and P.~L. Toint.
\newblock {\em Trust region methods}, volume~1.
\newblock Siam, 2000.

\bibitem{dean2008mapreduce}
J.~Dean and S.~Ghemawat.
\newblock Mapreduce: simplified data processing on large clusters.
\newblock {\em Communications of the ACM}, 51(1):107--113, 2008.

\bibitem{doikov2018randomized}
N.~Doikov and P.~Richt{\'a}rik.
\newblock Randomized block cubic newton method.
\newblock {\em arXiv preprint arXiv:1802.04084}, 2018.

\bibitem{dvurechensky2017gradient}
P.~Dvurechensky.
\newblock Gradient method with inexact oracle for composite non-convex
  optimization.
\newblock {\em arXiv preprint arXiv:1703.09180}, 2017.

\bibitem{eisen2017decentralized}
M.~Eisen, A.~Mokhtari, and A.~Ribeiro.
\newblock Decentralized quasi-newton methods.
\newblock {\em IEEE Transactions on Signal Processing}, 65(10):2613--2628,
  2017.

\bibitem{pmlr-v99-gasnikov19a}
A.~Gasnikov, P.~Dvurechensky, E.~Gorbunov, E.~Vorontsova, D.~Selikhanovych, and
  C.~A. Uribe.
\newblock Optimal tensor methods in smooth convex and uniformly
  convexoptimization.
\newblock In A.~Beygelzimer and D.~Hsu, editors, {\em Proceedings of the
  Thirty-Second Conference on Learning Theory}, volume~99 of {\em Proceedings
  of Machine Learning Research}, pages 1374--1391, Phoenix, USA, 25--28 Jun
  2019. PMLR.

\bibitem{gasnikov2019near}
A.~Gasnikov, P.~Dvurechensky, E.~Gorbunov, E.~Vorontsova, D.~Selikhanovych,
  C.~A. Uribe, B.~Jiang, H.~Wang, S.~Zhang, S.~Bubeck, et~al.
\newblock Near optimal methods for minimizing convex functions with lipschitz $
  p $-th derivatives.
\newblock In {\em Conference on Learning Theory}, pages 1392--1393, 2019.

\bibitem{gas16b}
A.~V. Gasnikov, E.~Gasnikova, Y.~E. Nesterov, and A.~Chernov.
\newblock Efficient numerical methods for entropy-linear programming problems.
\newblock {\em Computational Mathematics and Mathematical Physics},
  56(4):514--524, 2016.

\bibitem{ghadimi2017second}
S.~Ghadimi, H.~Liu, and T.~Zhang.
\newblock Second-order methods with cubic regularization under inexact
  information.
\newblock {\em arXiv preprint arXiv:1710.05782}, 2017.

\bibitem{goffin1977convergence}
J.-L. Goffin.
\newblock On convergence rates of subgradient optimization methods.
\newblock {\em Mathematical programming}, 13(1):329--347, 1977.

\bibitem{Grapiglia2019}
G.~N. Grapiglia and Y.~Nesterov.
\newblock On inexact solution of auxiliary problems in tensor methods for
  convex optimization.

\bibitem{Grapiglia2019a}
G.~N. Grapiglia and Y.~Nesterov.
\newblock Tensor methods for minimizing functions with hölder continuous
  higher-order derivatives.

\bibitem{hendrikx2020optimal}
H.~Hendrikx, F.~Bach, and L.~Massoulie.
\newblock An optimal algorithm for decentralized finite sum optimization.
\newblock {\em arXiv preprint arXiv:2005.10675}, 2020.

\bibitem{jadbabaie2009distributed}
A.~Jadbabaie, A.~Ozdaglar, and M.~Zargham.
\newblock A distributed newton method for network optimization.
\newblock In {\em Proceedings of the 48h IEEE Conference on Decision and
  Control (CDC) held jointly with 2009 28th Chinese Control Conference}, pages
  2736--2741. IEEE, 2009.

\bibitem{Jiang2017}
B.~Jiang, T.~Lin, and S.~Zhang.
\newblock A unified scheme to accelerate adaptive cubic regularization and
  gradient methods for convex optimization.
\newblock {\em arXiv preprint arXiv:1710.04788}, 2017.

\bibitem{Kakade2009a}
S.~Kakade, S.~Shalev-Shwartz, and A.~Tewari.
\newblock Applications of strong convexity--strong smoothness duality to
  learning with matrices.
\newblock {\em CoRR, abs/0910.0610}, 2009.

\bibitem{Kamzolov2020}
D.~Kamzolov, P.~Dvurechensky, and A.~V. Gasnikov.
\newblock Universal intermediate gradient method for convex problems with
  inexact oracle.
\newblock {\em Optimization Methods and Software}, 0(0):1--28, 2020.

\bibitem{kamzolov2020near}
D.~Kamzolov and A.~Gasnikov.
\newblock Near-optimal hyperfast second-order method for convex optimization
  and its sliding.
\newblock {\em arXiv preprint arXiv:2002.09050}, 2020.

\bibitem{lalitha2019peer}
A.~Lalitha, O.~C. Kilinc, T.~Javidi, and F.~Koushanfar.
\newblock Peer-to-peer federated learning on graphs.
\newblock {\em arXiv preprint arXiv:1901.11173}, 2019.

\bibitem{lan17}
G.~Lan, S.~Lee, and Y.~Zhou.
\newblock Communication-efficient algorithms for decentralized and stochastic
  optimization.
\newblock {\em arXiv preprint arXiv:1701.03961}, 2017.

\bibitem{lan2018communication}
G.~Lan, S.~Lee, and Y.~Zhou.
\newblock Communication-efficient algorithms for decentralized and stochastic
  optimization.
\newblock {\em Mathematical Programming}, pages 1--48, 2018.

\bibitem{li2018sharp}
H.~Li, C.~Fang, W.~Yin, and Z.~Lin.
\newblock A sharp convergence rate analysis for distributed accelerated
  gradient methods.
\newblock {\em arXiv preprint arXiv:1810.01053}, 2018.

\bibitem{li2014scaling}
M.~Li, D.~G. Andersen, J.~W. Park, A.~J. Smola, A.~Ahmed, V.~Josifovski,
  J.~Long, E.~J. Shekita, and B.-Y. Su.
\newblock Scaling distributed machine learning with the parameter server.
\newblock In {\em 11th $\{$USENIX$\}$ Symposium on Operating Systems Design and
  Implementation ($\{$OSDI$\}$ 14)}, pages 583--598, 2014.

\bibitem{Mokhtari2016}
A.~Mokhtari, W.~Shi, Q.~Ling, and A.~Ribeiro.
\newblock A decentralized second-order method with exact linear convergence
  rate for consensus optimization.
\newblock {\em IEEE Transactions on Signal and Information Processing over
  Networks}, 2(4):507--522, 2016.

\bibitem{mokhtari2016decentralized}
A.~Mokhtari, W.~Shi, Q.~Ling, and A.~Ribeiro.
\newblock A decentralized second-order method with exact linear convergence
  rate for consensus optimization.
\newblock {\em IEEE Transactions on Signal and Information Processing over
  Networks}, 2(4):507--522, 2016.

\bibitem{monteiro2013accelerated}
R.~D. Monteiro and B.~F. Svaiter.
\newblock An accelerated hybrid proximal extragradient method for convex
  optimization and its implications to second-order methods.
\newblock {\em SIAM Journal on Optimization}, 23(2):1092--1125, 2013.

\bibitem{nemirovski2004interior}
A.~Nemirovski.
\newblock Interior point polynomial time methods in convex programming.
\newblock {\em Lecture notes}, 2004.

\bibitem{nes83}
Y.~Nesterov.
\newblock A method of solving a convex programming problem with convergence
  rate $\mathcal{O}(1/k^2)$.
\newblock In {\em Soviet Mathematics Doklady}, volume~27, pages 372--376, 1983.

\bibitem{nes05}
Y.~Nesterov.
\newblock Smooth minimization of non-smooth functions.
\newblock {\em Mathematical Programming}, 103(1):127--152, 2005.

\bibitem{nesterov2006cubic}
Y.~Nesterov.
\newblock Cubic regularization of newton's method for convex problems with
  constraints.
\newblock {\em Available at SSRN 921825}, 2006.

\bibitem{nesterov2013introductory}
Y.~Nesterov.
\newblock {\em Introductory lectures on convex optimization: A basic course},
  volume~87.
\newblock Springer Science \& Business Media, 2013.

\bibitem{nes13}
Y.~Nesterov.
\newblock {\em Introductory Lectures on Convex Optimization: A Basic Course},
  volume~87.
\newblock Springer Science \& Business Media, 2013.

\bibitem{nesterov2015universal}
Y.~Nesterov.
\newblock Universal gradient methods for convex optimization problems.
\newblock {\em Mathematical Programming}, 152(1-2):381--404, 2015.

\bibitem{nesterov2018implementable}
Y.~Nesterov.
\newblock Implementable tensor methods in unconstrained convex optimization.
\newblock Technical report, 2018.

\bibitem{Nesterov2018}
Y.~Nesterov.
\newblock {\em Lectures on Convex Optimization}.
\newblock Springer Optimization and Its Applications 137. Springer
  International Publishing, 2nd ed. edition, 2018.

\bibitem{nesterov2020superfast}
Y.~Nesterov.
\newblock Superfast second-order methods for unconstrained convex optimization.
\newblock {\em CORE DP}, 7:2020, 2020.

\bibitem{nesterov2006cubic2}
Y.~Nesterov and B.~T. Polyak.
\newblock Cubic regularization of newton method and its global performance.
\newblock {\em Mathematical Programming}, 108(1):177--205, 2006.

\bibitem{pilet2019simple}
A.~B. Pilet, D.~Frey, and F.~Ta{\"\i}ani.
\newblock Simple, efficient and convenient decentralized multi-task learning
  for neural networks.
\newblock 2019.

\bibitem{rockafellar2011variational}
R.~Rockafellar and R.~Wets.
\newblock {\em Variational analysis}, volume 317.
\newblock Springer, 2011.

\bibitem{sca17}
K.~Scaman, F.~Bach, S.~Bubeck, Y.~T. Lee, and L.~Massouli{\'e}.
\newblock Optimal algorithms for smooth and strongly convex distributed
  optimization in networks.
\newblock In {\em International Conference on Machine Learning}, pages
  3027--3036, 2017.

\bibitem{scaman2018optimal}
K.~Scaman, F.~Bach, S.~Bubeck, Y.~T. Lee, and L.~Massouli{\'e}.
\newblock Optimal algorithms for non-smooth distributed optimization in
  networks.
\newblock {\em arXiv preprint arXiv:1806.00291}, 2018.

\bibitem{Shamir2014}
O.~Shamir, N.~Srebro, and T.~Zhang.
\newblock Communication-efficient distributed optimization using an approximate
  newton-type method.
\newblock In {\em International conference on machine learning}, pages
  1000--1008, 2014.

\bibitem{shor2012minimization}
N.~Z. Shor.
\newblock {\em Minimization methods for non-differentiable functions},
  volume~3.
\newblock Springer Science \& Business Media, 2012.

\bibitem{song2019inexact}
C.~Song and J.~Liu.
\newblock Inexact proximal cubic regularized newton methods for convex
  optimization.
\newblock {\em arXiv preprint arXiv:1902.02388}, 2019.

\bibitem{Rasul19}
R.~Tutunov, H.~Bou-Ammar, and A.~Jadbabaie.
\newblock Distributed newton method for large-scale consensus optimization.
\newblock {\em IEEE Transactions on Automatic Control}, 64(10):3983--3994,
  2019.

\bibitem{Uribe2018a}
C.~A. Uribe, S.~Lee, A.~Gasnikov, and A.~Nedić.
\newblock A dual approach for optimal algorithms in distributed optimization
  over networks.
\newblock {\em arXiv preprint arXiv:1809.00710}, 2018.

\bibitem{wang2018giant}
S.~Wang, F.~Roosta, P.~Xu, and M.~W. Mahoney.
\newblock Giant: Globally improved approximate newton method for distributed
  optimization.
\newblock In {\em Advances in Neural Information Processing Systems}, pages
  2332--2342, 2018.

\bibitem{wang2018note}
Z.~Wang, Y.~Zhou, Y.~Liang, and G.~Lan.
\newblock A note on inexact condition for cubic regularized newton's method.
\newblock {\em arXiv preprint arXiv:1808.07384}, 2018.

\bibitem{wang2018stochastic}
Z.~Wang, Y.~Zhou, Y.~Liang, and G.~Lan.
\newblock Stochastic variance-reduced cubic regularization for nonconvex
  optimization.
\newblock {\em arXiv preprint arXiv:1802.07372}, 2018.

\bibitem{wei2013distributed}
E.~Wei, A.~Ozdaglar, and A.~Jadbabaie.
\newblock A distributed newton method for network utility maximization--i:
  Algorithm.
\newblock {\em IEEE Transactions on Automatic Control}, 58(9):2162--2175, 2013.

\bibitem{yang2013trading}
T.~Yang.
\newblock Trading computation for communication: Distributed stochastic dual
  coordinate ascent.
\newblock In {\em Advances in Neural Information Processing Systems}, pages
  629--637, 2013.

\bibitem{yashtini2016global}
M.~Yashtini.
\newblock On the global convergence rate of the gradient descent method for
  functions with h{\"o}lder continuous gradients.
\newblock {\em Optimization letters}, 10(6):1361--1370, 2016.

\bibitem{ye2020multi}
H.~Ye, L.~Luo, Z.~Zhou, and T.~Zhang.
\newblock Multi-consensus decentralized accelerated gradient descent.
\newblock {\em arXiv preprint arXiv:2005.00797}, 2020.

\bibitem{Zargham2013}
M.~Zargham, A.~Ribeiro, A.~Ozdaglar, and A.~Jadbabaie.
\newblock Accelerated dual descent for network flow optimization.
\newblock {\em IEEE Transactions on Automatic Control}, 59(4):905--920, 2013.

\bibitem{zhang2018cubic}
J.~Zhang and S.~Zhang.
\newblock A cubic regularized newton's method over riemannian manifolds.
\newblock {\em arXiv preprint arXiv:1805.05565}, 2018.

\bibitem{pmlr-v37-zhangb15}
Y.~Zhang and X.~Lin.
\newblock Disco: Distributed optimization for self-concordant empirical loss.
\newblock In F.~Bach and D.~Blei, editors, {\em Proceedings of the 32nd
  International Conference on Machine Learning}, volume~37 of {\em Proceedings
  of Machine Learning Research}, pages 362--370, Lille, France, 07--09 Jul
  2015. PMLR.

\bibitem{zhou2018stochastic}
D.~Zhou, P.~Xu, and Q.~Gu.
\newblock Stochastic variance-reduced cubic regularized newton method.
\newblock {\em arXiv preprint arXiv:1802.04796}, 2018.

\bibitem{zhou2019stochastic}
D.~Zhou, P.~Xu, and Q.~Gu.
\newblock Stochastic variance-reduced cubic regularization methods.
\newblock {\em Journal of Machine Learning Research}, 20(134):1--47, 2019.

\end{thebibliography}

\end{document}